\documentclass[12pt]{amsart}
\usepackage{fullpage}
\usepackage{graphicx}
\usepackage{todonotes}
\usepackage{tikz}
\usepackage{pgfplots}
\usetikzlibrary{intersections, pgfplots.fillbetween}

%Theorems--------------------------------------------------------

\newtheorem{theorem}{Theorem}[section]

\newtheorem{lemma}[theorem]{Lemma}
\newtheorem{proposition}[theorem]{Proposition}

\newtheorem{remark}[theorem]{Remark}

\numberwithin{equation}{section}\setcounter{secnumdepth}{3}
%\numberwithin{subsubsection}{section}

%Sets-----------------------------------------------------------

\newcommand\NN {{\mathbb N}}

\newcommand\RR {{\mathbb R}}

\newcommand\ZZ {{\mathbb Z}}

%Letters--------------------------------------------

\newcommand\cB{{\mathcal{B}  }}

\newcommand\cD{{\mathcal{D}  }}
\newcommand\cE{{\mathcal{E}  }}

\newcommand\cG{{\mathcal{G}  }}

\newcommand\cI{{\mathcal{I}  }}
\newcommand\cJ{{\mathcal{J}  }}
\newcommand\cK{{\mathcal{K}  }}
\newcommand\cL{{\mathcal{L}  }}

\newcommand\cP{{\mathcal{P}  }}
\newcommand\cQ{{\mathcal{Q}  }}
\newcommand\cR{{\mathcal{R}  }}
\newcommand\cS{{\mathcal{S}  }}

%Groups----------------------------------------------

%Lengths---Areas----EtCetera---------------------------

%

%

%

%

%---------------------------------------------------------------------

\begin{document}

\subjclass[2010]{Primary 28A80, secondary 11K55}

\keywords{Fractals, product of Cantor sets, gap conditions}

\title{On the measure of products from the middle-third Cantor set}

\author{Luca Marchese}

\address{Dipartimento di Matematica, Universit\`a di Bologna, Piazza di Porta San Donato 5, 40126, Bologna, Italia}

\email{luca.marchese4@unibo.it}

\begin{abstract}
We prove upper and lower bounds for the Lebesgue measure of the set of products $xy$ with $x$ and $y$ in the middle-third Cantor set. Our method is inspired by Athreya, Reznick and Tyson, but a different subdivision of the Cantor set provides a more rapidly converging approximation formula.
\end{abstract}

%----------------------------------------------------------------

\maketitle

%\tableofcontents

\section{Introduction}

The middle-third Cantor set is the well-known set $\cK\subset[0,1]$ of points of the form 
$$
x=\sum_{k=1}^\infty\frac{\alpha_k}{3^k},
\quad\text{ where }\quad
\alpha_k\in\{0,2\}
\quad\text{ for }\quad
k\geq1.
$$
Define $P:\RR^2\to\RR$ by $P(x,y):=xy$ and consider $P(\cK\times\cK)$, that is the set of products $xy$ with $x,y\in\cK$, which is a closed set because $P$ is continuous and $\cK$ is compact. 
Denote $\cL$ the Lebesgue measure of $\RR$. The main result of this paper is 
Theorem~\ref{TheoremMainTheorem} below.

\begin{theorem}
\label{TheoremMainTheorem}
We have
$$
\bigg|
\cL\big(P(\cK\times\cK)\big)
-
\frac{91782451}{113374080}
\bigg|
\leq
\frac{1}{10^6}.
$$
\end{theorem}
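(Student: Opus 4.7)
The plan is to sandwich $P(\cK\times\cK)$ between explicit computable approximations $L_n \subseteq P(\cK\times\cK) \subseteq U_n$ at a sufficiently deep level of a self-similar decomposition of $\cK$, chosen so that $\cL(U_n)-\cL(L_n)<2\cdot 10^{-6}$ and the common rational estimate equals $91782451/113374080$.

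The starting point is the level-$n$ decomposition $\cK = \bigcup_{w\in\{0,2\}^n} C_w$, with $C_w = a_w + 3^{-n}\cK \subseteq [a_w, a_w + 3^{-n}]$, so $P(\cK\times\cK) = \bigcup_{w,w'} C_w \cdot C_{w'}$. For the upper bound, I would enclose each block in its convex hull $I_{w,w'} = [a_w a_{w'},\, (a_w+3^{-n})(a_{w'}+3^{-n})]$ and take $U_n$ to be the union of the $4^n$ such intervals; computing $\cL(U_n)$ is then a sort-and-merge on rational endpoints with denominators dividing $3^{2n}$.

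The harder half is the lower bound. Expanding
$$
C_w \cdot C_{w'} \;=\; a_w a_{w'} \;+\; 3^{-n}\bigl(a_w \cK + a_{w'} \cK\bigr) \;+\; 3^{-2n}\,\cK\cdot\cK,
$$
the classical identity $\cK+\cK=[0,2]$, and more generally the fact that $\alpha\cK+\beta\cK$ covers a genuine interval whenever $\alpha/\beta$ lies in a suitable Newhouse-thickness range, would let me exhibit an explicit subinterval $J_{w,w'}\subseteq C_w\cdot C_{w'}$ for each pair. Blocks where the ratio falls outside the good range are refined one or more additional steps and patched together from their finer subintervals; letting the depth depend on the block is the ``different subdivision'' alluded to in the abstract, tailored to the multiplicative geometry rather than to the uniform product partition of Athreya--Reznick--Tyson, and explains the faster convergence.

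The main obstacle is the combinatorial bookkeeping at a depth fine enough to reach $10^{-6}$ precision: every interval inclusion must be certified and the union measures must be computed in exact rational arithmetic in order to produce the specific denominator $113374080 = 2^{7}\cdot 3^{11}\cdot 5$. Once $\cL(L_n)$ and $\cL(U_n)$ are explicitly known, the theorem follows from $\cL(L_n)\le \cL(P(\cK\times\cK))\le \cL(U_n)$ together with the numerical verification that $91782451/113374080$ lies within $10^{-6}$ of both endpoints.
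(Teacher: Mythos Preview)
Your proposal is a strategy outline, not a proof: you describe an upper/lower sandwich $L_n\subseteq P(\cK\times\cK)\subseteq U_n$ but never construct $L_n$ or $U_n$, never compute their measures, and explicitly flag ``the main obstacle is the combinatorial bookkeeping'' as unsolved. A proof of this theorem is precisely that bookkeeping; without it nothing has been established. In particular, your lower-bound step---``$\alpha\cK+\beta\cK$ covers an interval whenever $\alpha/\beta$ lies in a suitable Newhouse-thickness range''---is asserted at a level of vagueness that does not let one write down a single certified subinterval $J_{w,w'}$, let alone merge enough of them to reach $10^{-6}$ precision. Note too that your approach is essentially the one of Athreya--Reznick--Tyson, which the paper cites as needing a computer run to level $n=11$ to get comparable accuracy; there is no indication in your sketch of how you would avoid that.

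You also misidentify the ``different subdivision'' the paper uses. It is not a variable-depth refinement of the standard dyadic tree $\{0,2\}^n$. The paper first reduces to $\cR=\cK\cap[2/3,1]$ via $\cL(P(\cK\times\cK))=\tfrac{3}{2}\cL(P(\cR\times\cR))$, and then introduces a \emph{fast subdivision} $(\cD_n)$ of $\cR$ in which each step removes \emph{countably many} gaps (the central third plus all the gaps adjacent to each endpoint). The point is that for any interval $I\subset[2/3,1]$ one can compute $P(I\times I)\setminus P(\cD_I\times\cD_I)$ exactly, and its measure is $\tfrac{5}{36}|I|^2$; summing and iterating gives the error bound $\cL(P(\cD_n\times\cD_n))-\cL(P(\cR\times\cR))\le \tfrac{1}{63}(1/36)^n$, so already $n=3$ suffices. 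The specific constant $91782451/170061120$ then comes from an explicit analysis of which gaps at levels $\le 3$ are ``covered'' by products from other blocks, governed by a clean arithmetic criterion in terms of $A_I(0)/|I|$. None of this structure is visible in your sketch.
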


Previous estimates of $\cL\big(P(\cK\times\cK)\big)$ appear in~\cite{AthreyaReznikTyson} and~\cite{GuJiangXiZhao}, see \S~\ref{SectionApproximationFormula} below for more details. 
Other arithmetic operations with $x,y\in\cK$ are considered in~\cite{AthreyaReznikTyson}, which describes the structure of quotients $y/x$ with $x\not=0$ and proves that $[0,1]$ is covered by products $x^2y$, so that in particular any element of $[0,1]$ is the product of 3 factors in $\cK$. In~\cite{JiangLiWangZhao} it is proved that sums $x_1^2+x_2^2+x_3^2+x_4^2$ with $x_i\in\cK$ for $i=1,2,3,4$ cover $[0,4]$, which was conjectured in~\cite{AthreyaReznikTyson}. In~\cite{JiangXi} is described a general condition on maps $f:\RR^2\to\RR$ such that $f(\cK\times\cK)$ has non-empty interior, where such condition is obviously satisfied by the arithmetic operation mentioned above. For the image under affine maps, and in particular for $S(x,y):=x+y$, a much larger class of Cantor sets and other fractals have been studied. Specific \emph{gap conditions} guarantee that the image is an interval. A first use of gap conditions appears in~\cite{Hall} and a more recent application in~\cite{CabrelliHareMolter}. A gap condition is used for the product of Cantor sets in~\cite{Takahashi}, and for Lipschitz perturbations of $S(\cdot,\cdot)$ 
in Theorem~1.12 in~\cite{ArtigianiMarcheseUlcigrai}. 
Similar ideas are used also in 
\S~\ref{SectionSubdivisionOutsideDiagonal} and \S~\ref{SectionSubdivisionAlongDiagonal} of this paper, inspired by~\cite{AthreyaReznikTyson}. 
Other techniques appear in~\cite{MoreiraYoccoz} and~\cite{SchmelingShmerkin}.

\subsection{Approximation formula}
\label{SectionApproximationFormula}

For $E\subset\RR$ and $c>0$ set $c\cdot E:=\{cx:x\in E\}$. 
The level lines of $P(\cdot,\cdot)$ have \emph{low curvature} in $[2/3,1]^2$. 
For this reason (see \S~\ref{SectionSubdivisionOutsideDiagonal}) we consider the right half $\cR:=\cK\cap[2/3,1]$ of $\cK$. From  
$
\cK=\{0\}\cup\bigcup_{k=0}^\infty 3^{-k}\cdot\cR
$ 
we get 
$$
P(\cK\times\cK)=\{0\}\cup\bigcup_{k=0}^\infty 3^{-k}\cdot P(\cR\times\cR).
$$
The union above is disjoint, because 
$
P(\cR\times\cR)\subset[4/9,1]
$ 
and $1/3<4/9$. Therefore
\begin{equation}
\label{EquationMeasureCantorAndRightHalfCantor}
\cL\big(P(\cK\times\cK)\big)
=
\sum_{k=0}^\infty3^{-k}\cL\big(P(\cR\times\cR)\big)
=
\frac{3}{2}\cL\big(P(\cR\times\cR)\big).
\end{equation}

A \emph{subdivision} for $\cR$ is a nested family of compact sets 
$
\cD_1\supset\dots\supset\cD_n\supset\cD_{n+1}\dots
$ 
such that $\cR=\bigcap_{n=1}^\infty\cD_n$, where any $\cD_{n+1}$ is obtained removing from $\cD_n$ some open intervals (finitely of countably many). A natural subdivision for $\cR$ is 
$\cR_n:=\cK_n\cap[2/3,1]$, where $\cK_n$ is the standard subdivision for $\cK$, 
obtained setting $\cK_0:=[0,1]$ and iteratively  
$$
\cK_n:=
\cK_{n-1}\setminus\bigcup_{j=1}^{3^{n-1}}
\bigg(\frac{3(j-1)+1}{3^n},\frac{3(j-1)+2}{3^n}\bigg)
\quad\text{ for }\quad
n\geq 1.
$$
Theorem~1 in~\cite{AthreyaReznikTyson} gives 
$
17/21<\cL\big(P(\cK\times\cK)\big)<5/6
$. 
These bounds follow from \eqref{EquationMeasureCantorAndRightHalfCantor}, from 
$\cL\big(P(\cR_0\times\cR_0)\big)=5/9$ and from Remark~8 in~\cite{AthreyaReznikTyson}, which gives 
\begin{equation}
\label{EquationFormulaAthreyaReznikTyson}
0\leq
\cL\big(P(\cR_n\times\cR_n)\big)-\cL\big(P(\cR\times\cR)\big)
\leq 
\frac{1}{63}\bigg(\frac{2}{9}\bigg)^n.
\end{equation}

Running a computer program, the authors of \cite{AthreyaReznikTyson} can evaluate 
\eqref{EquationFormulaAthreyaReznikTyson} for $n=11$, which gives 
$
\cL\big(P(\cK\times\cK)\big)=0,80955358\pm10^{-8}
$. 
In~\cite{GuJiangXiZhao} is considered a parameter family of Cantor sets  $\cK^{(\lambda)}$, 
with $\cK^{(\lambda)}=\cK$ for $\lambda=1/3$. A similar approach gives an extension of \eqref{EquationFormulaAthreyaReznikTyson} to any parameter $\lambda$, which provides a sequence of continuous functions of 
$\lambda$ converging uniformly to 
$
\lambda\mapsto\cL\big(P(\cK^{(\lambda)}\times\cK^{(\lambda)})\big)
$, 
so that the latter is continuous too. With a computer program, the authors of~\cite{GuJiangXiZhao} obtain the same first 5 digits in the evaluation of $\cL\big(P(\cK\times\cK)\big)$ by~\cite{AthreyaReznikTyson}. Theorem~\ref{TheoremMainTheorem} confirms such first 5 digits. 
The main tool of this paper is the \emph{fast subdivision} $(\cD_n)_{n\geq1}$ for $\cR$ introduced in 
\S~\ref{SectionDefinitionFastSubdivision}, which gives the rapidly converging approximation formula in Proposition~\ref{PropositionApproximationFastSubdivision} below. 
Figure~\ref{FigureSubdivision} represents one step in such subdivision.

\begin{proposition}
\label{PropositionApproximationFastSubdivision}
Let $(\cD_n)_{n\geq0}$ be the subdivision in \eqref{EquationDefinitionFastSubdivision}. 
For any $n\geq0$ we have
$$
0\leq
\cL\big(P(\cD_n\times\cD_n)\big)-\cL\big(P(\cR\times\cR)\big)
\leq
\frac{1}{63}\bigg(\frac{1}{36}\bigg)^n.
$$
\end{proposition}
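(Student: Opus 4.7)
My plan is as follows. The lower bound is immediate: since $\cR \subset \cD_n$ for every $n$, the inclusion $P(\cR \times \cR) \subset P(\cD_n \times \cD_n)$ yields $\cL(P(\cR \times \cR)) \leq \cL(P(\cD_n \times \cD_n))$.

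For the upper bound, I would proceed by telescoping. Define the per-level increment
$$
\delta_k := \cL\bigl(P(\cD_k \times \cD_k)\bigr) - \cL\bigl(P(\cD_{k+1} \times \cD_{k+1})\bigr) \geq 0.
$$
Because the compact sets $P(\cD_k \times \cD_k)$ decrease monotonically to $P(\cR \times \cR)$, one has
$$
\cL\bigl(P(\cD_n \times \cD_n)\bigr) - \cL\bigl(P(\cR \times \cR)\bigr) = \sum_{k \geq n} \delta_k.
$$
The goal is then to establish a geometric per-step bound $\delta_k \leq \tfrac{35}{63} \cdot 36^{-(k+1)}$; summing the geometric series of ratio $1/36$ produces exactly $\tfrac{1}{63} \cdot 36^{-n}$.

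To bound $\delta_k$ I would argue geometrically. The set $P(\cD_k \times \cD_k) \setminus P(\cD_{k+1} \times \cD_{k+1})$ is contained in the $P$-image of the ``cross-shaped'' difference $(\cD_k \times \cD_k) \setminus (\cD_{k+1} \times \cD_{k+1})$, which decomposes into finitely many pieces of the form (newly removed open interval) $\times \, \cD_k$ together with their transposes. Using the low-curvature property of the level curves $xy = c$ in $[2/3, 1]^2$---the slope $-c/x^2$ stays bounded---the Lebesgue measure of the $P$-image of each such piece is controlled by the length of the removed interval times the vertical extent of its image band. From the construction of the fast subdivision (Figure~\ref{FigureSubdivision}), these lengths should scale like $6^{-(k+1)}$, producing the desired $36^{-(k+1)}$ order.

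The main obstacle will be the rigorous per-step estimate of $\delta_k$. One must verify that each newly opened ``product-gap'' at level $k+1$ is either covered by products of neighbouring pieces of $\cD_{k+1} \times \cD_{k+1}$---a kind of gap condition in the spirit of Hall and Takahashi, cited in the introduction---or else contributes at most the claimed amount. The self-similar combinatorial structure of the fast subdivision, combined with the explicit quantitative form of the low-curvature estimate, will be the key ingredients; carrying out this case analysis precisely is where the bulk of the work lies.
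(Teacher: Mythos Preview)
Your telescoping framework and the per-step target $\delta_k\le\frac{5}{9}\cdot 36^{-(k+1)}$ (note $35/63=5/9$) match the paper exactly. The gap is in how you propose to reach that bound. Controlling $\delta_k$ through the $P$-image of the cross $(\cD_k\times\cD_k)\setminus(\cD_{k+1}\times\cD_{k+1})$ does not work: each strip $G\times\cD_k$, with $G$ a removed interval, has $P$-image of length comparable to the full extent of $\cD_k$ (order $1/3$), not to $|G|$, so the measure of the image of the cross does not decay with $k$. Your heuristic that ``lengths scale like $6^{-(k+1)}$'' is also off: at every level $\cE(\cD_k)$ contains \emph{infinitely many} intervals, of all sizes $|I|/3^j$ with $j\ge2$; what is actually controlled is the sum of squares $\sum_{I\in\cE(\cD_k)}|I|^2=\tfrac{1}{9}\cdot 36^{-k}$ (Lemma~\ref{LemmaSumSquaresIntervals}).

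The idea you are missing is that the loss is purely \emph{diagonal}. For any two distinct $J_1,J_2\in\cE(\cD_k)$ one already has $P(J_1\times J_2)\subset P(\cR\times\cR)\subset P(\cD_{k+1}\times\cD_{k+1})$. This is where the gap condition enters, via the identity $P(\ddot I\times\ddot J)=P(I\times J)$ for disjoint equal-length intervals $I,J\subset[2/3,1]$ (equation~\eqref{EquationSubdivisionOutsideDiagonal} and Lemma~\ref{LemmaSubdivisionOutsideDiagonal}); iterating it shows that off-diagonal products persist all the way down to $\cR$. Once off-diagonal pieces are discarded, $\delta_k$ is bounded by $\sum_{I\in\cE(\cD_k)}\cL\bigl(P(I\times I)\setminus P(\cD_I\times\cD_I)\bigr)$, and an explicit computation on each square (Proposition~\ref{PropositionLocalPictureGaps} and equation~\eqref{EquationLocalPictureMeasureOfGaps}) gives this equal to $\tfrac{5}{36}|I|^2$. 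Summing then yields exactly $\delta_k\le\tfrac{5}{9}\cdot 36^{-(k+1)}$. In short, you invoke gap conditions in the wrong place: they are not used to decide case-by-case which strips are covered, but to eliminate \emph{all} off-diagonal contributions at once, localising the entire loss to the diagonal squares.
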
 

\subsection{Proof of main Theorem~\ref{TheoremMainTheorem}} 

Theorem~\ref{TheoremMainTheorem} follows directly applying \eqref{EquationMeasureCantorAndRightHalfCantor}, 
Proposition~\ref{PropositionApproximationFastSubdivision} with $n=3$, and the next 
Proposition~\ref{PropositionBoundsSecondFastSubdivision}, which is proved in \S~\ref{SectionBoundsSecondSubdivision} below. For the error observe that 
$$
\frac{3}{2}\Big(\frac{1}{64\cdot9^6}+\frac{1}{63\cdot36^3}\Big)
=
\frac{11}{7\cdot16\cdot3^{11}}
<
\frac{1}{10^6}.
$$

\begin{proposition}
\label{PropositionBoundsSecondFastSubdivision}
We have 
$$
\bigg|
\cL\big(P(\cD_3\times\cD_3)\big)-\frac{91782451}{170061120}
\bigg|
\leq
\frac{1}{64\cdot9^6}.
$$
\end{proposition}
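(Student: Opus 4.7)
The plan is to reduce the claim to an exact finite rational arithmetic computation using the explicit structure of $\cD_3$. From \S~\ref{SectionDefinitionFastSubdivision}, the set $\cD_3$ is a finite union of closed intervals $[a_i,b_i]\subset[2/3,1]$ with rational endpoints whose denominators divide a controlled power of $3$, up to a possible countable tail whose contribution to the measure will be estimated separately. Because every $a_i,b_i$ lies in the positive quadrant, $P(x,y)=xy$ is strictly monotone in each variable, and the image of each rectangle $[a_i,b_i]\times[a_j,b_j]$ under $P$ is exactly the interval $[a_i a_j,\, b_i b_j]$. Consequently
$$
P(\cD_3\times\cD_3)=\bigcup_{i,j}[a_i a_j,\, b_i b_j]
$$
is a finite union of rational-endpoint intervals.

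The next step is purely mechanical: enumerate the $N^2$ image intervals, sort their endpoints, merge the overlapping ones, and sum the lengths of the resulting connected components. This produces $\cL\big(P(\cD_3\times\cD_3)\big)$ as an exact rational number, which one compares to $91782451/170061120$. For $N$ of order a few dozen this enumeration is not feasible by hand, so the natural tool is a computer-assisted exact computation in rational arithmetic; no floating point intervenes. The computation must distinguish pairs $(i,j)$ whose image intervals are disjoint, nested, or partially overlapping, and this is where the geometry of level curves of $P$ on $[2/3,1]^2$---in particular their low curvature, exploited throughout the paper---ensures the merging pattern has the tractable structure inherited from the fast subdivision.

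The error term $1/(64\cdot 9^6)$ accounts for any small countable tail in the description of $\cD_3$ relative to the finite union used in the enumeration above. By the same geometric contraction that produces the factor $(1/36)^n$ in Proposition~\ref{PropositionApproximationFastSubdivision}, refining the finite part of $\cD_3$ one step further changes $\cL\big(P(\cD_3\times\cD_3)\big)$ by a quantity bounded by an expression of the form $\sum_i c_i (b_i-a_i)^2$ summed over the level-$3$ rectangles, where $c_i$ is a slope bound for $P$; plugging in the explicit level-$3$ endpoints yields a quantity at most $1/(64\cdot 9^6)$. The main obstacle is therefore entirely combinatorial and arithmetical: there is no conceptual difficulty once Proposition~\ref{PropositionApproximationFastSubdivision} and the explicit level-$3$ endpoint data of the fast subdivision are in hand, but the exact rational computation of the union of a few hundred intervals, and the sharp tail estimate matching the constant $1/(64\cdot 9^6)$, require careful bookkeeping.
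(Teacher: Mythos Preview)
Your proposal is not a proof but a sketch of a computational strategy, and it rests on a structural misreading of $\cD_3$. By construction each step of the fast subdivision replaces every interval $I$ by the \emph{countably infinite} family $\cE(\cD_I)$ (the set $[0,1]\setminus\cB$ already has infinitely many components). Hence $\cD_1$, $\cD_2$, $\cD_3$ each consist of infinitely many intervals, not ``a few dozen'', and there is no finite enumeration producing $\cL\big(P(\cD_3\times\cD_3)\big)$ as an exact rational by brute force. Your appeal to a ``countable tail'' does not fix this: every nontrivial piece of $\cD_3$ sits inside such a tail, and you give no mechanism to sandwich the true measure between two finite computations with a gap of size $1/(64\cdot 9^6)$. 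The tail estimate you invoke, namely the $(1/36)^n$ contraction of Proposition~\ref{PropositionApproximationFastSubdivision}, compares $\cD_3$ to $\cD_4$, not a finite truncation of $\cD_3$ to the full $\cD_3$; these are different approximations and the bound you need does not follow.

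The paper's argument is analytic rather than enumerative. It tracks $\mu_n:=\cL\big(P(I\times I)\setminus P(\cD_n\times\cD_n)\big)$ for $I=[2/3,1]$ and computes the increments $\mu_1$, $\mu_2-\mu_1$, $\mu_3-\mu_2$ by summing explicit geometric series. The key device is an arithmetic criterion (Lemma~\ref{LemmaCoveredGaps}, equations~\eqref{EquationAritmeticConditionLEFT}--\eqref{EquationAritmeticConditionRIGHT}) that decides, for each level-$2$ interval $D_{(E,m,\pm)}$ or $D_{(F,m,\pm)}$, exactly how many of the gaps it generates at level~$3$ are already covered by some $\cP_{(I,\cdot,\pm)}\subset P(\cR\times\cR)$; the truncated sum~\eqref{EquationTruncatedSumGaps} then gives the uncovered contribution in closed form. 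Summing over all $k,m\geq1$ collapses to a handful of double geometric series, yielding the rational $91782451/170061120$. The error $1/(64\cdot 9^6)$ is not a truncation artefact at all: it comes from one specific family of diagonal squares $D_{(E_k,k,+)}$ with $k\geq2$ for which $\cQ_{(E_k,k,+)}$ genuinely overlaps $\cP_{(I,k-1,-)}$, so the covered-gap dichotomy fails and only the crude bound $\sum_{k\geq2}\cL(\cG_{D_{(E_k,k,+)}})=\sum_{k\geq2}(5/36)9^{-2k-3}=1/(64\cdot 9^6)$ is available. None of this structure appears in your plan.
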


\subsection*{Structure of this paper}

In \S~\ref{SectionFastSubdivisionProofApproximationFormula} we introduce the fast subdivision 
$(\cD_n)_{n\geq1}$. The proof of Proposition~\ref{PropositionApproximationFastSubdivision} is completed in \S~\ref{SectionEndProofProofApproximationFormula}. 
In \S~\ref{SectionBoundsSecondSubdivision} we prove Proposition~\ref{PropositionBoundsSecondFastSubdivision}.

\section{The fast subdivision: proof of Proposition~\ref{PropositionApproximationFastSubdivision}}
\label{SectionFastSubdivisionProofApproximationFormula}

\subsection{Definition of the fast subdivision}
\label{SectionDefinitionFastSubdivision}

Consider $\cB\subset(0,1)$ below, which is the disjoint union of countably many open intervals. 
$$
\cB:=
\Bigg(\bigcup_{k\geq1}\bigg(\frac{1}{3^{k+1}},\frac{2}{3^{k+1}}\bigg)\Bigg)
\cup
\bigg(\frac{1}{3},\frac{2}{3}\bigg)
\cup
\Bigg(\bigcup_{k\geq1}\bigg(\frac{3^{k+1}-2}{3^{k+1}},\frac{3^{k+1}-1}{3^{k+1}}\bigg)\Bigg). 
$$
For a closed interval $I=[a,b]$ with $b>a$ let $A_I:[0,1]\to I$ be the unique affine orientation preserving bijection between $[0,1]$ and $I$, that is $A_I(x):=a+(b-a)x$. The closed set $[0,1]\setminus \cB$ is the union of countably many intervals, together with the points $\{0\}$ and $\{1\}$. 
For a set $\cS\subset[0,1]$ which is the union of countably many points and countably many intervals, define 
$$
\cE(\cS):=\{I:\text{ $I$ is a connected component of $\cS$ with non-empty interior}\}.
$$ 
that is the family of intervals of $\cS$. 
For instance $\cE\big([2/3,1]\big)=\{[2/3,1]\}$, and $\cE(\cB)$ denotes the set of open intervals composing $\cB$. As another example, $\big\{\{0\},\{1\}\big\}$ is the set of connected components of $[0,1]\setminus \cB$ that don't belong to
$\cE\big([0,1]\setminus \cB\big)$. Define the \emph{fast subdivision} $(\cD_n)_{n\geq0}$ of $\cR$ setting $\cD_0:=[2/3,1]$ and iteratively 
\begin{equation}
\label{EquationDefinitionFastSubdivision}
\cD_n:=\cD_{n-1}\setminus\bigcup_{I\in\cE(\cD_{n-1})}A_I(\cB)
\quad\text{ for }\quad
n\geq1.
\end{equation} 

\begin{lemma}
\label{LemmaFastSubdivisionSubdividesCantor}
\eqref{EquationDefinitionFastSubdivision} defines a subdivision for $\cR$.
\end{lemma}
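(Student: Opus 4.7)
The plan is to check the three defining properties of a subdivision: compactness of each $\cD_n$, the description of $\cD_{n+1}$ as $\cD_n$ minus a countable disjoint union of open intervals, and the set equality $\bigcap_{n\geq 0}\cD_n=\cR$. The first two are essentially by construction; the set equality is the real content, and within it the inclusion $\bigcap_n\cD_n\subseteq\cR$ will be the main obstacle.

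For the routine items and for the inclusion $\cR\subset\cD_n$, I would induct on $n$ while carrying the stronger invariant that each $I\in\cE(\cD_n)$ has the form $[a/3^m,(a+1)/3^m]$ where the $m$-digit base-$3$ representation of $a$ uses only the digits in $\{0,2\}$. The base case $\cE(\cD_0)=\{[2/3,1]\}$ is clear. For the inductive step, observe that the open components of $\cB$ are exactly the three extremal families of $\cK$-gaps $(c/3^\ell,(c+1)/3^\ell)$ whose $\ell$-digit codes $c$ take the form $0^{\ell-1}1$, $2^{\ell-1}1$, or simply $1$. The image under $A_I$ of such a gap has numerator whose base-$3$ representation is the concatenation of the digits of $a$ and of $c$, so it lies in $\{0,2\}^*\cdot\{1\}$ --- i.e.\ the image is again a gap of $\cK$. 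Summing over the (inductively countable) family $\cE(\cD_{n-1})$ then yields the countable disjoint union of open intervals removed at stage $n$, the compactness of $\cD_n$ (closed minus open), and the inclusion $\cR\subset\cD_n$. A direct check identifies $\cE([0,1]\setminus\cB)$ with the family of Cantor intervals whose codes are precisely $\{0^{k}2,2^{k}0:k\geq 1\}$, so concatenating with $a$ shows that the components of $I\setminus A_I(\cB)$ are again standard Cantor intervals, closing the induction.

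The main obstacle is the reverse inclusion $\bigcap_n\cD_n\subseteq\cR$. My plan is to show that every open $\cK$-gap $G=(c/3^\ell,(c+1)/3^\ell)$ lying inside $[2/3,1]$ is a component of $A_I(\cB)$ for some $I\in\cE(\cD_n)$ and some $n\geq 0$, so it is removed at stage $n+1$. Such a $G$ has $c$'s ternary expansion of the form $2\,d_{\ell-2}\,\ldots\,d_1\,1$ with the intermediate $d_i\in\{0,2\}$. I would parse the suffix $s=d_{\ell-2}\,\ldots\,d_1\,1$ greedily from left to right into consecutive segments: each segment is a maximal run of identical digits $0$ or $2$ of length $k\geq 1$ followed by the next distinct character. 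A segment ending in the opposite digit (of type $0^{k}2$ or $2^{k}0$) is an ``$X$-segment''; a segment ending in $1$ (of type $1$, $0^{k}1$, or $2^{k}1$) is the terminal ``$Y$-segment''. Since $s\in\{0,2\}^*\cdot\{1\}$, this greedy parse is uniquely determined and terminates after some $n\geq 0$ $X$-segments $X_1,\ldots,X_n$ followed by one $Y$-segment. By the recursive construction above, the prefix $2\,X_1\,\ldots\,X_n$ is then precisely the code of an interval $I\in\cE(\cD_n)$, while $Y$ is precisely the code of a component $B$ of $\cB$. Concatenating codes gives $G=A_I(B)\subset A_I(\cB)$, so $G$ is removed at stage $n+1$, proving the inclusion and completing the argument.
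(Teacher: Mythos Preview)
Your plan is correct and shares the paper's underlying idea---a ternary-digit analysis showing that every $\cK$-gap contained in $[2/3,1]$ is removed at some stage of the fast subdivision---but the execution is genuinely different. The paper proves the key inclusion $\bigcap_n\cD_n\subset\cR$ by induction on the depth $m$ of the gap $J$: it examines the trailing block of identical digits of the code of $J$ (just before the terminal $1$), strips it off to produce a shallower auxiliary gap $\widetilde{J}$, applies the inductive hypothesis to locate $\widetilde{I}\in\cE(\cD_n)$ with $\widetilde{J}\in\cE(A_{\widetilde{I}}(\cB))$, and then argues that the triadic interval $I$ adjacent to $\widetilde{J}$ lies in $\cE(\cD_{n+1})$ and that $J\in\cE(A_I(\cB))$. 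Your version instead first strengthens the invariant to an exact description of the codes of $\cE(\cD_n)$ as the language $2\cdot\{0^k2,2^k0:k\geq1\}^n$, and then replaces the induction on $m$ by a single explicit left-to-right greedy parse of the gap code into $X$-segments and a terminal $Y$-segment. What you gain is a direct, non-recursive identification of the stage at which each gap is removed and a transparent bijection between $\cE(\cD_n)$ and a regular language of codes; what the paper gains is that it never needs to state or prove that full code characterisation, only the weaker fact that each $I\in\cE(\cD_n)$ is a triadic interval.
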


\begin{proof}
It is clear that $\cD_n\subset\cD_{n-1}$ for any $n\geq1$. Easy recursive arguments show that $\cD_n$ is closed for any $n\geq0$ and that for any $I\in\cE(\cD_n)$ there exist $m\in\NN$ and $k\geq1$ such that 
$I=[m/3^k,(m+1)/3^k]$. In remains to show $\cR=\bigcap_{n\geq0}\cD_n$. 
For $x\not\in\bigcap_{n\geq0}\cD_n$, consider $n$ with $x\not\in\cD_{n+1}$ and $x\in I=[m/3^k,(m+1)/3^k]$ for some $I\in\cE(\cD_n)$. The definition of $\cB$ implies that there exists $j\geq1$ such that 
$$
\frac{m}{3^k}+\frac{3a+1}{3^{k+j}}<x<\frac{m}{3^k}+\frac{3a+2}{3^{k+j}}
\quad\text{for either }a=0\text{ or }a=3^{j-1}-1,
$$
which implies $x\not\in\cR$. Conversely, if $x\not\in\cR$, then $x\in J$ for an open interval $J$ of the form
\begin{equation}
\label{EquationLemmaFastSubdivisionSubdividesCantor}
J=\bigg(\frac{3a+1}{3^m},\frac{3a+2}{3^m}\bigg)
\quad\text{ for integers }
m\geq2\text{ and }a\geq2\cdot3^{m-2},
\end{equation}
where $m$ is minimal with such property. We prove $x\not\in\bigcap_{n\geq0}\cD_n$ showing by induction on $m$ that there exists $n\geq0$ and an interval $I\in\cE(\cD_n)$ with $J\in\cE\big(A_I(\cB)\big)$, that is $J$ is a gap removed at the $(n+1)$-th step of the fast subdivision. This statement is easily verified for $m=2,3$. We can also assume $J\not\in\cE\big(A_{[2/3,1]}(\cB)\big)$, that is $J$ is not removed at the first step, otherwise the statement is clearly true. For $m\geq4$ consider integers $b_2,\dots,b_{m-1}$ with 
$$
\bigg(\frac{2}{3},1\bigg)
\supset
\bigg(\frac{b_2}{3^2},\frac{1+b_2}{3^2}\bigg)
\supset\dots\supset
\bigg(\frac{b_{m-1}}{3^{m-1}},\frac{1+b_{m-1}}{3^{m-1}}\bigg)
\supset J.
$$ 
The minimality property of $m$ implies $[b_k]\in\{[0],[2]\}$ for $k=2,\dots,m-1$, where $[j]$ denotes the class in $\ZZ/3\ZZ$ of $j\in\ZZ$. If $[b_{m-1}]=[0]$ let $r\geq2$ be minimal with $[b_k]=[0]$ for 
$k=r+1,\dots,m-1$. This minimality property of $r$ implies that 
$$
\widetilde{J}:=
\bigg(\frac{b_r-1}{3^r},\frac{b_r}{3^r}\bigg)
$$
satisfies \eqref{EquationLemmaFastSubdivisionSubdividesCantor}. 
Then we have $n\geq0$ and $\widetilde{I}\in\cE(\cD_n)$ with 
$
\widetilde{J}\in\cE\big(A_{\widetilde{I}}(\cB)\big)
$ 
by inductive assumption, that is $\widetilde{J}$ is a gap removed at the $(n+1)$-step of the fast subdivision. In this case the statement follows because the last condition implies 
$$
I:=\bigg(\frac{b_r}{3^r},\frac{1+b_r}{3^r}\bigg)\in\cE(\cD_{n+1})
\quad\text{ and }\quad
J\in\cE\big(A_I(\cB)\big).
$$
If $[b_{m-1}]=[2]$ consider $r\geq2$ minimal with $[b_k]=[2]$ for 
$k=r+1,\dots,m-1$ and apply the analogous argument to 
$
\widetilde{J}:=\big((1+b_r)/3^r,(2+b_r)/3^r\big)
$. 
\end{proof}

The next \S~\ref{SectionEndProofProofApproximationFormula} requires 
Lemma~\ref{LemmaSumSquaresIntervals} below. Denote $|I|:=\cL(I)$ the length of intervals $I$.  

\begin{lemma}
\label{LemmaSumSquaresIntervals}
For any $n\geq0$ we have 
$$
\sum_{I\in\cE(\cD_n)}|I|^2=\frac{1}{9}\Big(\frac{1}{36}\Big)^n.
$$
\end{lemma}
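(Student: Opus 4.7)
I would prove the identity by induction on $n$, exploiting the self-similar structure of the fast subdivision under the affine maps $A_I$.

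For the base case $n = 0$, we have $\cE(\cD_0) = \{[2/3,1]\}$, so the sum equals $(1/3)^2 = 1/9$, matching the formula. For the inductive step, observe that $\cD_{n+1}$ is obtained from $\cD_n$ by replacing each $I \in \cE(\cD_n)$ with the closed set $I \setminus A_I(\cB)$. Since $A_I$ is affine with slope $|I|$, the elements of $\cE(\cD_{n+1})$ contained in $I$ are precisely the images $A_I(J)$ for $J \in \cE([0,1] \setminus \cB)$, each of length $|I|\cdot|J|$. Summing squares inside a single $I$ and then over $I \in \cE(\cD_n)$ yields
$$
\sum_{J' \in \cE(\cD_{n+1})} |J'|^2
= \Big(\sum_{I \in \cE(\cD_n)} |I|^2\Big)\Big(\sum_{J \in \cE([0,1]\setminus \cB)} |J|^2\Big),
$$
so the total is multiplied by the constant $\sigma := \sum_{J \in \cE([0,1]\setminus \cB)} |J|^2$ at each step.

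It then suffices to verify $\sigma = 1/36$ directly. The complement $[0,1] \setminus \cB$ consists of the two isolated points $\{0\}$ and $\{1\}$ (which do not contribute to $\cE$) together with the intervals squeezed between consecutive gaps of $\cB$: namely the two \emph{central} intervals $[2/9,1/3]$ and $[2/3,7/9]$ of length $1/9$ (adjacent to the middle-third gap), and, for each $k \geq 1$, two symmetric intervals of length $1/3^{k+2}$, one between the gaps $(1/3^{k+2},2/3^{k+2})$ and $(1/3^{k+1},2/3^{k+1})$ and its mirror image under $x \mapsto 1-x$ on the right of the middle-third gap. A geometric series then gives
$$
\sigma = 2\Big(\frac{1}{9}\Big)^2 + 2\sum_{k\geq 1}\Big(\frac{1}{3^{k+2}}\Big)^2 = \frac{2}{81} + \frac{1}{324} = \frac{1}{36},
$$
and combining with the base case completes the induction.

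The main obstacle is the careful enumeration of $\cE([0,1]\setminus \cB)$: one must correctly identify the two length-$1/9$ intervals adjacent to the central gap $(1/3,2/3)$ together with the two sequences of intervals of lengths $1/3^{k+2}$ accumulating at the endpoints $\{0\}$ and $\{1\}$. Once this bookkeeping is done, the single-step contraction factor is the exact value $1/36$ appearing in the statement, and the induction closes cleanly.
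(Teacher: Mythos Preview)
Your proposal is correct and follows essentially the same approach as the paper: both argue by induction, use that the affine map $A_I$ scales lengths by $|I|$ to factor the sum of squares as $\sigma\cdot\sum_{I\in\cE(\cD_n)}|I|^2$, and compute the constant $\sigma=\sum_{J\in\cE([0,1]\setminus\cB)}|J|^2=1/36$ by enumerating the intervals of $[0,1]\setminus\cB$ as two intervals of length $1/3^k$ for each $k\geq2$. The paper writes this last sum compactly as $2\sum_{k=2}^\infty(1/3^k)^2$, while you separate the $k=2$ terms from the tail, but the content is identical.
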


\begin{proof}
Fix $n\geq1$ and $J\in\cE(\cD_{n-1})$. The intervals of $J\cap\cD_n$ are the images under $A_J$ of the intervals of $[0,1]\setminus\cB$. Since $A_J$ is affine with $dA_J(t)/dt=|J|$ for any $t\in[0,1]$, we have
$$
\sum_{I\in\cE(J\cap\cD_n)}|I|^2
=
|J|^2\sum_{\widetilde{I}\in\cE([0,1]\setminus\cB)}\big|\widetilde{I}\big|^2
=
|J|^2\cdot2\sum_{k=2}^\infty\Big(\frac{1}{3^k}\Big)^2
=
\frac{|J|^2}{36}.
$$
Therefore for any $n\geq1$ we have
$$
\sum_{I\in\cE(\cD_n)}|I|^2
=
\sum_{J\in\cE(\cD_{n-1})}
\Big(\sum_{\widetilde{I}\in\cE(J\cap\cD_n)}
\big|\widetilde{I}\big|^2\Big)
=
\frac{1}{36}\sum_{J\in\cE(\cD_{n-1})}|J|^2
=
\bigg|\Big[\frac{2}{3},1\Big]\bigg|^2
\cdot
\Big(\frac{1}{36}\Big)^n.
$$
\end{proof}

\subsection{Subdivisions outside the diagonal}
\label{SectionSubdivisionOutsideDiagonal}

Consider any interval $I\subset[2/3,1]$ and write $I=[a,a+3t]$ with $2/3\leq a<a+3t\leq1$. Then set 
\begin{equation}
\label{EquationNotationMiddleThirdGap}
\ddot{I}:=[a,a+t]\cup[a+2t,a+3t].
\end{equation}
Consider intervals $I,J\subset[2/3,1]$ such that $I=[a,a+3t]$ and $J=[b,b+3t]$, and assume that 
$I\cap J=\emptyset$. According to Lemma~11 in~\cite{AthreyaReznikTyson} we have 
\begin{equation}
\label{EquationSubdivisionOutsideDiagonal}
P(\ddot{I}\times\ddot{J})=P(I\times J).
\end{equation}
The elementary proof follows computing the extremal values of $P(\cdot,\cdot)$ over the four connected components of $\ddot{I}\times\ddot{J}$ and checking that the images overlap. 

\begin{lemma}
\label{LemmaContinuousImageCompactIntersection}
Let $f:\RR^2\to\RR$ be a continuous function and $(C_n)_{n\geq0}$ be a sequence of compact sets of $\RR^2$ with $C_{n+1}\subset C_n$ for any $n\geq0$. Then 
$$
f(C)=\bigcap_{n\geq0}f(C_n)
\quad\text{ where }\quad
C:=\bigcap_{n\geq0}C_n.
$$
\end{lemma}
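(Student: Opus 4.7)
The plan is to prove both inclusions separately. The forward inclusion $f(C)\subset\bigcap_{n\geq0}f(C_n)$ is immediate: since $C\subset C_n$ for every $n$, we have $f(C)\subset f(C_n)$ for every $n$, and the claim follows by taking the intersection over $n$. This step requires no hypothesis beyond the monotonicity of images under $f$.

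The interesting inclusion is $\bigcap_{n\geq0}f(C_n)\subset f(C)$. I would pick an arbitrary $y\in\bigcap_{n\geq0}f(C_n)$ and, for every $n$, select some $x_n\in C_n$ with $f(x_n)=y$ using the axiom of choice. The sequence $(x_n)_{n\geq0}$ lies entirely inside $C_0$, which is compact, so by Bolzano--Weierstrass it admits a subsequence $(x_{n_k})_{k\geq0}$ converging to some limit $x\in C_0$. I would then argue that $x\in C$: for each fixed $m$, the tail $(x_{n_k})_{k\geq k_0}$ with $n_{k_0}\geq m$ lies in $C_{n_k}\subset C_m$ (using the nestedness $C_{n+1}\subset C_n$), and since $C_m$ is closed the limit $x$ belongs to $C_m$; taking the intersection over $m$ gives $x\in C$.

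Finally, by continuity of $f$ we have $f(x_{n_k})\to f(x)$, but $f(x_{n_k})=y$ for all $k$, so $f(x)=y$, which places $y$ in $f(C)$.

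There is no substantial obstacle here: the statement is a standard compactness-plus-continuity argument. The only care needed is in the passage from ``$x_{n_k}\in C_{n_k}$ for large $k$'' to ``$x\in C_m$ for every $m$'', which is exactly where the nested compactness hypothesis is used. The continuity of $f$ is used only in the very last step to transfer convergence from the domain to the codomain.
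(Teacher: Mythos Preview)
Your proof is correct and follows essentially the same approach as the paper: both establish the nontrivial inclusion by choosing preimages $x_n\in C_n$ of $y$, extracting a convergent subsequence in $C_0$, and using continuity together with nested compactness to conclude that the limit lies in $C$ and maps to $y$. The only cosmetic difference is that the paper shows $x\in C$ by contradiction (a neighborhood of $x$ disjoint from $C_k$ for large $k$), whereas you argue directly via closedness of each $C_m$.
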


\begin{proof}
Since $f(C)\subset f(C_k)$ for any $k\geq0$, then
$
f(C)\subset \bigcap_{k\geq0} f(C_k)
$. 
Fix $y\in \bigcap_{n\geq0}f(C_n)$. For any $k\geq0$ there exists $x_k\in C_k$ with $y=f(x_k)$. Since $x_k\in C_0$ for any $k$, modulo subsequences we have $x_k\to x$ for some $x\in C_0$. 
Continuity implies $y=f(x)$. We must have $x\in \bigcap_{n\geq0}C_n$, indeed otherwise there exists $N\geq1$ and a neighborhood of $U$ of $x$ with $x_k\in U$ and $U\cap C_k=\emptyset$ for any $n\geq N$, which is absurd. Therefore 
$
\bigcap_{k\geq0} f(C_k)\subset f(C)
$. 
\end{proof}

Recall that $\cK\subset[0,1]$ denotes the middle-third Cantor set. 

\begin{lemma}
\label{LemmaSubdivisionOutsideDiagonal}
Consider intervals $I,J\subset[2/3,1]$ such that $I=[a,a+3t]$ and $J=[b,b+3t]$, and assume that 
$I\cap J=\emptyset$. Then we have
$$
P\big(A_I(\cK)\times A_J(\cK)\big)=P(I\times J).
$$
\end{lemma}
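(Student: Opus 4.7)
The plan is to iterate \eqref{EquationSubdivisionOutsideDiagonal} using the standard approximations $\cK_n$ of $\cK$ defined in the introduction, and then pass to the limit via Lemma~\ref{LemmaContinuousImageCompactIntersection}. Concretely, I would prove by induction on $n\geq 0$ the identity
$$
P\big(A_I(\cK_n)\times A_J(\cK_n)\big)=P(I\times J).
$$
The base case $n=0$ is trivial since $A_I(\cK_0)=I$ and $A_J(\cK_0)=J$. For the induction step, write $A_I(\cK_n)=\bigsqcup_{k=1}^{2^n}I_k$ and $A_J(\cK_n)=\bigsqcup_{l=1}^{2^n}J_l$, where each $I_k\subset I$ and each $J_l\subset J$ has common length $3t/3^n$, these being the images under $A_I$, resp.\ $A_J$, of the $2^n$ closed intervals of $\cK_n$. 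Then
$$
A_I(\cK_{n+1})\times A_J(\cK_{n+1})=\bigcup_{k,l}\ddot{I}_k\times\ddot{J}_l,
$$
with notation as in \eqref{EquationNotationMiddleThirdGap}. For each pair $(k,l)$ the intervals $I_k$ and $J_l$ have equal length and satisfy $I_k\cap J_l\subset I\cap J=\emptyset$, so \eqref{EquationSubdivisionOutsideDiagonal} applies and yields $P(\ddot{I}_k\times\ddot{J}_l)=P(I_k\times J_l)$. Taking the union over $(k,l)$ and using the inductive hypothesis gives
$$
P\big(A_I(\cK_{n+1})\times A_J(\cK_{n+1})\big)=\bigcup_{k,l}P(I_k\times J_l)=P\big(A_I(\cK_n)\times A_J(\cK_n)\big)=P(I\times J).
$$

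To conclude, set $C_n:=A_I(\cK_n)\times A_J(\cK_n)$, which is a nested sequence of compact subsets of $\RR^2$ with $\bigcap_{n\geq0}C_n=A_I(\cK)\times A_J(\cK)$ (using that $\cK=\bigcap_n\cK_n$ and that intersection commutes with products of nested sets). Applying Lemma~\ref{LemmaContinuousImageCompactIntersection} to $P$ and the sequence $(C_n)_{n\geq0}$ gives
$$
P\big(A_I(\cK)\times A_J(\cK)\big)=\bigcap_{n\geq0}P(C_n)=\bigcap_{n\geq0}P(I\times J)=P(I\times J).
$$

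The essential content is really the inductive step, and the point to be careful about is that the two hypotheses of \eqref{EquationSubdivisionOutsideDiagonal} — equal length and disjointness — are both preserved under passing from $(I_k,J_l)$ to the subintervals produced by middle-third removal. Equal length is automatic because the ternary construction acts in the same way on both sides; disjointness is automatic because every subinterval of $I_k$ lies in $I$ and every subinterval of $J_l$ lies in $J$, and $I\cap J=\emptyset$ by hypothesis. No further obstacle arises: the limit step is the content of Lemma~\ref{LemmaContinuousImageCompactIntersection}, already at hand.
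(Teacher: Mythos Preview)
Your proof is correct and follows essentially the same approach as the paper: both iterate \eqref{EquationSubdivisionOutsideDiagonal} over the successive ternary refinements of $I$ and $J$ (your $A_I(\cK_n)$, $A_J(\cK_n)$ are exactly the paper's $\cI_n$, $\cJ_n$), and then invoke Lemma~\ref{LemmaContinuousImageCompactIntersection} to pass to the limit. Your explicit verification that equal length and disjointness persist for each pair $(I_k,J_l)$ matches the paper's remark that every such pair ``satisfies the same assumption as the pair $(I,J)$''.
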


\begin{proof}
Set $\cI_0:=I$ and $\cJ_0:=J$ and for $n\geq1$ define inductively the compact sets 
$$
\cI_n:=\bigcup_{E\in\cE(\cI_{n-1})}\ddot{E}
\quad\text{ and }\quad
\cJ_n:=\bigcup_{F\in\cE(\cJ_{n-1})}\ddot{F},
$$
where $\cI_n\subset\cI_{n-1}$ and $\cJ_n\subset\cJ_{n-1}$. We have  
$
A_I(\cK)=\bigcap_{n=0}^\infty\cI_n
$ 
and 
$
A_J(\cK)=\bigcap_{n=0}^\infty\cJ_n
$. 
For any $n\geq0$, any pair of intervals $(E,F)$ with $E\in\cE(\cI_n)$ and $F\in\cE(\cJ_n)$ satisfies the same assumption as the pair $(I,J)$ in the statement. 
Then \eqref{EquationSubdivisionOutsideDiagonal} implies 
$$
P(\cI_{n+1}\times\cJ_{n+1})
=
\bigcup_{E\in\cE(\cI_n),F\in\cE(\cJ_n)}P(\ddot{E}\times\ddot{F})
=
\bigcup_{E\in\cE(\cI_n),F\in\cE(\cJ_n)}P(E\times F)
=
P(\cI_n\times\cJ_n).
$$
This implies $P(\cI_n\times\cJ_n)=P(I\times J)$ for any $n\geq0$. 
Lemma~\ref{LemmaContinuousImageCompactIntersection} gives
$$
P\big(A_I(\cK)\times A_J(\cK)\big)
=
\bigcap_{n\geq0}P(\cI_n\times\cJ_n)
=
P(I\times J).
$$
\end{proof}

\subsection{Subdivision along the diagonal} 
\label{SectionSubdivisionAlongDiagonal}

For an interval $I\subset[2/3,1]$ write $I=[a,a+3t]$ with $2/3\leq a<a+3t\leq1$. 
In the notation of \eqref{EquationNotationMiddleThirdGap} we have 
\begin{equation}
\label{EquationSubdivisionDiagonal}
P(I\times I)\setminus P(\ddot{I}\times\ddot{I})
=
\big((a+2t)^2-t^2,(a+2t)^2\big),
\end{equation}
because $(a+t)^2>a(a+2t)$ and $(a+t)(a+3t)=(a+2t)^2-t^2<(a+2t)^2$. 
For $I$ as above consider the map $A_I:[0,1]\to I$. 
We have $A_I(t)=A_I(0)+|I|t$, where $|I|:=\cL(I)$. Set 
$$
\cD_I:=I\setminus A_I(\cB).
$$
For $0\leq x<y\leq1$ set 
$
I(x,y):=\big[A_I(x),A_I(y)\big]
$. 
For $k\geq0$ define the intervals 
\begin{align*}
&
\cP_{(I,k,-)}:=
P\big(I(2/3^{k+1},1/3^k)\times I(0,1/3^{k+1})\big)
\\
&
\cP_{(I,k,+)}:=
P\big(I(1-1/3^{k+1},1)\times I(1-1/3^k,1-2/3^{k+1})\big),
\end{align*}
where  
$
\cP_{(I,0,-)}=\cP_{(I,0,+)}=P\big(I(2/3,1)\times I(0,1/3)\big)
$.

\begin{lemma}
\label{LemmaSubdivisionOutsideDiagonal(Local)}
For any $I\subset[2/3,1]$ and any $k\geq0$ we have
$$
\cP_{(I,k,-)},\cP_{(I,k,+)}\subset P\big(A_I(\cK)\times A_I(\cK)\big).
$$
\end{lemma}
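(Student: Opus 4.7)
The plan is to reduce the claim to Lemma~\ref{LemmaSubdivisionOutsideDiagonal} by exploiting the self-similarity of $\cK$ at level $k+1$. I will spell out the $-$ case in detail; the $+$ case is completely symmetric and will be handled at the end.

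First, writing $I=[a,a+3t]$ and unwinding the definition of $I(\cdot,\cdot)$, I compute
$$
E := I(0,1/3^{k+1}) = [a,\, a+t/3^k],
\qquad
F := I(2/3^{k+1},1/3^k) = [a+2t/3^k,\, a+3t/3^k].
$$
A direct check shows that $E$ and $F$ lie in $[2/3,1]$, are disjoint, and share the common length $t/3^k = 3\cdot(t/3^{k+1})$. Thus the hypotheses of Lemma~\ref{LemmaSubdivisionOutsideDiagonal} are satisfied by the pair $(F,E)$, yielding
$$
\cP_{(I,k,-)} = P(F\times E) = P\big(A_F(\cK)\times A_E(\cK)\big).
$$

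Next I will verify that $A_E(\cK), A_F(\cK) \subset A_I(\cK)$. The self-similarity of $\cK$ at level $k+1$ gives
$$
\cK \cap [0, 1/3^{k+1}] = \cK/3^{k+1},
\qquad
\cK \cap [2/3^{k+1}, 1/3^k] = 2/3^{k+1} + \cK/3^{k+1},
$$
so both $\cK/3^{k+1}$ and $2/3^{k+1} + \cK/3^{k+1}$ are subsets of $\cK$. Since $A_E = A_I\circ(x\mapsto x/3^{k+1})$ and $A_F = A_I\circ(x\mapsto 2/3^{k+1}+x/3^{k+1})$, applying $A_I$ to these inclusions gives the desired containments, and the $-$ case is complete.

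The $+$ case is handled identically, using the analogous self-similarity at the right end of $\cK$: the relevant identities are $\cK\cap[1-1/3^{k+1},1] = (1-1/3^{k+1})+\cK/3^{k+1}$ and $\cK\cap[1-1/3^k,1-2/3^{k+1}] = (1-1/3^k)+\cK/3^{k+1}$, and the two sub-intervals again have common length $t/3^k$ and are disjoint inside $I$. The only real obstacle is the arithmetic bookkeeping needed to verify the length/disjointness conditions for Lemma~\ref{LemmaSubdivisionOutsideDiagonal}; once these are in place, the conclusion is a purely formal application of self-similarity.
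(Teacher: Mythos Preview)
Your proof is correct and follows essentially the same approach as the paper's: apply Lemma~\ref{LemmaSubdivisionOutsideDiagonal} to the two equal-length disjoint subintervals defining $\cP_{(I,k,\pm)}$, then use self-similarity of $\cK$ to embed the resulting affine copies of $\cK$ into $A_I(\cK)$. The paper's version is terser---it writes out only the case $k=0$ and then asserts that ``the same argument applies'' for $k\geq1$---while you carry out the computation explicitly for general $k$, but the underlying idea is identical.
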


\begin{proof}
We have 
$
A_{I(0,1/3)}(\cK)\subset A_I(\cK)
$ 
and 
$
A_{I(2/3,1)}(\cK)\subset A_I(\cK)
$. 
The statement follows for $\cP_{(I,0,-)}=\cP_{(I,0,+)}$ because Lemma~\ref{LemmaSubdivisionOutsideDiagonal} gives 
$$
P\big(I(2/3,1)\times I(0,1/3)\big)
=
P\big(A_{I(2/3,1)}(\cK)\times A_{I(0,1/3)}(\cK)\big).
$$
The same argument applies to $\cP_{(I,k,-)}$ and $\cP_{(I,k,+)}$ for $k\geq1$.
\end{proof}

For $k\geq0$ set $I_{(k,-)}:=I(0,1/3^k)$ and $I_{(k,+)}:=I(1-1/3^k,1)$, where $I_{(0,-)}=I_{(0,+)}=I$. 
Recall \eqref{EquationSubdivisionDiagonal} and for $k\geq0$ define the open intervals
\begin{align*}
&
\cG_{(I,k,-)}:=
P(I_{(k,-)}\times I_{(k,-)})\setminus P(\ddot{I}_{(k,-)}\times\ddot{I}_{(k,-)})
\\
&
\cG_{(I,k,+)}:=
P(I_{(k,+)}\times I_{(k,-)})\setminus P(\ddot{I}_{(k,+)}\times\ddot{I}_{(k,+)}).
\end{align*}
Observe that  
$
\cG_{(I,0,-)}=\cG_{(I,0,+)}=P(I\times I)\setminus P(\ddot{I}\times\ddot{I})
$. 
Then set 
$$
\cG_I:=
\Big(\bigcup_{k=1}^\infty\cG_{(I,k,-)}\Big)
\cup
\cG_{(I,0,-)}
\cup
\Big(\bigcup_{k=1}^\infty\cG_{(I,k,+)}\Big).
$$
Finally for $k\geq1$ denote the elements of $\cE(\cD_I)$ by 
$$
D_{(I,k,-)}:=I(2/3^{k+1},1/3^k)
\quad\text{ and }\quad
D_{(I,k,+)}:=I(1-1/3^k,1-2/3^{k+1})
$$
and define the intervals
$$
\cQ_{_{(I,k,-)}}:=P(D_{(I,k,-)}\times D_{(I,k,-)})
\quad\text{ and }\quad
\cQ_{_{(I,k,+)}}:=P(D_{(I,k,+)}\times D_{(I,k,+)}).
$$

\begin{remark}
\label{RemarkOrderIntervalsLocalSubdivisionDiagonal}
Fix $I$. For simplicity write $A=A_I$. For $x,y,z,t\in\RR$ we have 
\begin{equation}
\label{EquationComparisonExtremalValues}
A(x)A(y)>A(z)A(t)
\quad\Leftrightarrow\quad
zt-xy<\frac{A(0)}{|I|}(x+y-z-t).
\end{equation}
Using \eqref{EquationComparisonExtremalValues}, and observing that $A(0)/|I|\geq 2$ 
for intervals $I\subset[2/3,1]$, it is easy to verify that for for any $k\geq0$ we have
\begin{align*}
&
\sup\cG_{(I,k+1,-)}=\inf \cQ_{(I,k+1,-)}<\inf\cP_{(I,k,-)}
\quad\Leftrightarrow\quad
A(2/3^{k+2})^2<A(0)A(2/3^{k+1})
\\
&
\inf\cP_{(I,k,-)}<\sup \cQ_{(I,k+1,-)}
\quad\Leftrightarrow\quad
A(0)A(2/3^{k+1})<A(1/3^{k+1})^2
\\
&
\sup \cQ_{(I,k+1,-)}<\sup\cP_{(I,k,-)}
\quad\Leftrightarrow\quad
A(1/3^{k+1})^2<A(1/3^k)A(1/3^{k+1})
\\
&
\sup\cP_{(I,k,-)}=\inf\cG_{(I,k,-)}
<
\sup\cG_{(I,k,-)}\leq\sup\cG_{(I,k,+)}=\inf \cQ_{(I,k+1,+)}
\end{align*}
and for any $k\geq1$ we have 
\begin{align*}
&
\inf \cQ_{(I,k,+)}<\inf \cP_{(I,k,+)}
\quad\Leftrightarrow\quad
A(1-1/3^k)^2<A(1-1/3^{k+1})A(1-1/3^k)
\\
&
\inf \cP_{(I,k,+)}
<
\sup \cQ_{(I,k,+)}
\quad\Leftrightarrow\quad
A(1-1/3^{k+1})A(1-1/3^k)<A(1-2/3^{k+1})^2
\\
&
\sup \cQ_{(I,k,+)}
<
\sup \cP_{(I,k,+)}
\quad\Leftrightarrow\quad
A(1-2/3^{k+1})^2<A(1)A(1-2/3^{k+1})
\\
&
\sup \cP_{(I,k,+)}=\inf\cG_{(I,k,+)}<\sup\cG_{(I,k,+)}=\inf \cQ_{(I,k+1,+)}.
\end{align*}
The equalities above follow from \eqref{EquationSubdivisionDiagonal}. 
See Figure~\ref{FigureSubdivision}.
\end{remark}

Remark~\ref{RemarkOrderIntervalsLocalSubdivisionDiagonal} implies that $\cG_I$ is a disjoint union. 
Then \eqref{EquationSubdivisionDiagonal} implies 
\begin{equation}
\label{EquationLocalPictureMeasureOfGaps}
\cL(\cG_I)
=
\cL(\cG_{(I,0,-)})
+
\sum_{k=1}^\infty\cL(\cG_{(I,k,-)})+\cL(\cG_{(I,k,-)})
=
\frac{|I^2|}{3^2}+2\sum_{k=1}^\infty\frac{|I|^2}{3^{2k+2}}
=
\frac{5}{36}|I|^2.
\end{equation}

\begin{proposition}
\label{PropositionLocalPictureGaps}
Fix an interval $I\subset[2/3,1]$. We have
\begin{equation}
\label{Equation(1)PropositionLocalPictureGaps}
P(I\times I)\setminus P(\cD_I\times\cD_I)=\cG_I.
\end{equation}
\end{proposition}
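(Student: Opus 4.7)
I will prove~\eqref{Equation(1)PropositionLocalPictureGaps} via the two inclusions
$\cG_I\cap P(\cD_I\times\cD_I)=\emptyset$ and
$P(I\times I)\setminus\cG_I\subseteq P(\cD_I\times\cD_I)$.
The first says that each candidate gap $\cG_{(I,k,\pm)}$ is genuine; the second says that the intervals
$\cP_{(I,k,\pm)}$ and $\cQ_{(I,k,\pm)}$, which lie in $P(\cD_I\times\cD_I)$ by Lemma~\ref{LemmaSubdivisionOutsideDiagonal(Local)}
and by definition of $\cQ$ respectively, already cover the complement of $\cG_I$ in $P(I\times I)$.

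For the first inclusion, fix $k\geq 0$ and take $z=xy\in\cG_{(I,k,-)}$ with $x\leq y$ in $\cD_I$.
If $y\leq A_I(1/3^k)$, then $x,y\in I_{(k,-)}$, and because the open middle-third
$(A_I(1/3^{k+1}),A_I(2/3^{k+1}))$ of $I_{(k,-)}$ is one of the intervals comprising $A_I(\cB)$,
we have $\cD_I\cap I_{(k,-)}\subseteq\ddot{I}_{(k,-)}$. Hence
$xy\in P(\ddot{I}_{(k,-)}\times\ddot{I}_{(k,-)})$, which is disjoint from $\cG_{(I,k,-)}$ by definition.
If instead $y>A_I(1/3^k)$, then the open interval $(A_I(1/3^k),A_I(2/3^k))$ is also in $A_I(\cB)$,
so $y\geq A_I(2/3^k)$; hence $xy\geq A_I(0)\cdot A_I(2/3^k)$, and a direct computation using
$A_I(0)\geq 2/3$ and $|I|\leq 1/3$ yields
$A_I(0)\cdot A_I(2/3^k)\geq A_I(2/3^{k+1})^2=\sup\cG_{(I,k,-)}$, so $xy\notin\cG_{(I,k,-)}$.
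The analogous argument for $\cG_{(I,k,+)}$ splits on whether $x\geq A_I(1-1/3^k)$:
in the complementary case $x\leq A_I(1-2/3^k)$ forces
$xy\leq A_I(1-2/3^k)\cdot A_I(1)<A_I(1-2/3^{k+1})\cdot A_I(1)=\inf\cG_{(I,k,+)}$.

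For the second inclusion, observe that $A_I(0)^2$ and $A_I(1)^2$ lie in $P(\cD_I\times\cD_I)$.
By the strict chain of inequalities in Remark~\ref{RemarkOrderIntervalsLocalSubdivisionDiagonal},
the overlapping pair $\cQ_{(I,k+1,-)}\cup\cP_{(I,k,-)}$ is the closed interval
$[\sup\cG_{(I,k+1,-)},\inf\cG_{(I,k,-)}]$ for each $k\geq 0$, and symmetrically
$\cQ_{(I,k,+)}\cup\cP_{(I,k,+)}=[\sup\cG_{(I,k-1,+)},\inf\cG_{(I,k,+)}]$ for each $k\geq 1$.
Telescoping in $k$ and adjoining the two extremal endpoints, the union of these subsets of
$P(\cD_I\times\cD_I)$ equals
$[A_I(0)^2,\inf\cG_{(I,0,-)}]\cup[\sup\cG_{(I,0,+)},A_I(1)^2]$ minus the open intervals
$\cG_{(I,k,-)}$ and $\cG_{(I,k,+)}$ for $k\geq 1$. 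Since $\cG_{(I,0,-)}=\cG_{(I,0,+)}$ is precisely the
remaining interval between the two halves, the complement of this union in $P(I\times I)$ is exactly
$\cG_I$, as required.

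The main obstacle is the quantitative inequality $A_I(0)\cdot A_I(2/3^k)\geq A_I(2/3^{k+1})^2$
in the case $y>A_I(1/3^k)$ above; this reduces to $2|I|/3^{k+1}\leq A_I(0)$, which holds
uniformly from $|I|\leq 1/3$ and $A_I(0)\geq 2/3$ whenever $I\subseteq[2/3,1]$ and $k\geq 0$.
Everything else is a careful reading of Remark~\ref{RemarkOrderIntervalsLocalSubdivisionDiagonal}.
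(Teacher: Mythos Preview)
Your proof is correct and follows essentially the same route as the paper's: both rely on Lemma~\ref{LemmaSubdivisionOutsideDiagonal(Local)} and on the chain of inequalities in Remark~\ref{RemarkOrderIntervalsLocalSubdivisionDiagonal}, the only difference being organizational---the paper first establishes the equality $P(\cD_I\times\cD_I)=\bigcup_k\cP_{(I,k,\pm)}\cup\bigcup_k\cQ_{(I,k,\pm)}$ via the decomposition~\eqref{EquationPropositionLocalPictureGaps} and then reads off the complement, whereas you verify the two inclusions $\cG_I\cap P(\cD_I\times\cD_I)=\emptyset$ and $P(I\times I)\setminus\cG_I\subseteq P(\cD_I\times\cD_I)$ directly. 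The inequality you single out at the end, $A_I(2/3^{k+1})^2\leq A_I(0)A_I(2/3^k)$, is exactly the first displayed line of Remark~\ref{RemarkOrderIntervalsLocalSubdivisionDiagonal} with the index shifted by one.
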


\begin{proof}
Write $E^2:=E\times E$ for sets $E\subset[0,1]$. The set 
$
\cD_I^2\cap\{(x,y)\in\RR^2:y\leq x\}
$ 
is contained in the union over $k\geq0$ of the sets 
$$
\cD_{(I,k+1,-)}^2\cup\cD_{(I,k+1,+)}^2\cup
\Big(I(2/3^{k+1},1/3^k)\times I_{(k+1,-)}\Big)
\cup
\Big(I_{(k+1,+)}\times I(1-1/3^k,1-2/3^{k+1})\Big)
$$
This implies 
\begin{equation}
\label{EquationPropositionLocalPictureGaps}
P(\cD_I^2)
\subset
\Big(\bigcup_{k\geq0}\cP_{(I,k,-)}\cup\cP_{(I,k,+)}\Big)
\cup
\Big(\bigcup_{k\geq1}\cQ_{(I,k,-)}\cup \cQ_{(I,k,+)}\Big).
\end{equation}
On the other hand $A_I(\cK)\subset\cD_I$ by Lemma~\ref{LemmaFastSubdivisionSubdividesCantor}. 
Hence Lemma~\ref{LemmaSubdivisionOutsideDiagonal(Local)} implies that $P(\cD_I^2)$ contains 
$\cP_{(I,k,-)}$ and $\cP_{(I,k,+)}$ for $k\geq0$. Obviously $P(\cD_I^2)$ also contains 
$\cQ_{(I,k,-)}$ and $\cQ_{(I,k,+)}$ for any $k\geq1$. 
Therefore the inclusion in \eqref{EquationPropositionLocalPictureGaps} is indeed an equality between sets. The intervals in $\cG_I$ fill the gaps in $P(\cD_I^2)$ 
by Remark~\ref{RemarkOrderIntervalsLocalSubdivisionDiagonal}. 
This proves \eqref{Equation(1)PropositionLocalPictureGaps}.
\end{proof}

\subsection{End of the proof of Proposition~\ref{PropositionApproximationFastSubdivision}}
\label{SectionEndProofProofApproximationFormula}

Fix $n\geq1$ and consider two intervals $J_1,J_2$ in $\cE(\cD_n)$ with $J_1\not=J_2$. Without loss of generality assume $\sup J_2<\inf J_1$. 
Let $m\leq n-1$ be maximal such that there exists $I\in\cE(\cD_m)$ with $J_i\subset I$ for $i=1,2$. Maximality implies that $J_1$ and $J_2$ are included into different connected components of $\cD_I$. If $J_1\subset D_{(I,k,-)}$ for some $k\geq1$, then 
Lemma~\ref{LemmaSubdivisionOutsideDiagonal(Local)} implies 
$$
P(J_1\times J_2)
\subset
P\Big(
I(2/3^{k+1},1/3^k)\times I(0,1/3^{k+1})
\Big)
=
\cP_{(I,k,-)}
\subset
P(\cR\times\cR),
$$
where we recall that $A_I(\cK)\subset\cR$ by Lemma~\ref{LemmaFastSubdivisionSubdividesCantor}. Otherwise there exists $l\geq0$ with 
$$
J_1\times J_2\subset I(1-1/3^{l+1},1)\times I(1-1/3^l,1-2/3^{l+1})
$$
and we get again 
$
P(J_1\times J_2)\subset\cP_{(I,l,+)}\subset P(\cR\times\cR)
$ 
by Lemma~\ref{LemmaSubdivisionOutsideDiagonal(Local)}. Both inclusions cannot be derived directly from \eqref{EquationSubdivisionOutsideDiagonal} because, a priori, $J_1$ and $J_2$ have different sizes. Thus  
$$
P\Big((\cD_n\times\cD_n)\setminus\bigcup_{I\in\cE(\cD_n)}I\times I\Big)
\subset 
P(\cR\times\cR)
\quad\text{ for any }\quad 
n\geq0.
$$
Since $P(\cR\times\cR)\subset P(\cD_n\times\cD_n)$ for any $n\geq0$, then \eqref{EquationDefinitionFastSubdivision} and 
\eqref{Equation(1)PropositionLocalPictureGaps} give
\begin{align*}
P(\cD_n\times\cD_n)\setminus P(\cD_{n+1}\times\cD_{n+1})
&
\subset
\bigcup_{I\in\cE(D_n)}
P(I\times I)\setminus P(\cD_{n+1}\times\cD_{n+1})
\\
&
\subset
\bigcup_{I\in\cE(D_n)}
P(I\times I)\setminus P(\cD_I\times\cD_I)
=
\bigcup_{I\in\cE(D_n)}\cG_I.
\end{align*}
Therefore \eqref{EquationLocalPictureMeasureOfGaps} and 
Lemma~\ref{LemmaSumSquaresIntervals} give
$$
\cL\big(P(\cD_n\times\cD_n)\big)
-
\cL\big(P(\cD_{n+1}\times\cD_{n+1})\big)
\leq
\sum_{I\in\cE(D_n)}\cL(\cG_I)
=
\frac{5}{36}\sum_{I\in\cE(D_n)}|I|^2
=
\frac{5}{9\cdot36^{n+1}}.
$$
For any $n\geq0$ and $m\geq n+1$ a telescopic argument gives
$$
0
\leq 
\cL\big(P(\cD_n\times\cD_n)\big)
-
\cL\big(P(\cD_m\times\cD_m)\big)
\leq
\sum_{k=n+1}^{m}
\frac{5/9}{36^k}
\leq
\sum_{k=n+1}^\infty
\frac{5/9}{36^k}
=
\frac{1}{63\cdot36^n}.
$$
We have  
$
\cL\big(P(\cD_m\times\cD_m)\big)\to \cL\big(P(\cR\times\cR)\big)
$ 
as $m\to\infty$. 
Proposition~\ref{PropositionApproximationFastSubdivision} is proved.

\begin{figure}[h]
\begin{center}

\begin{tikzpicture}[scale=6]

%\begin{scope}

\draw[-,very thin] (0,0) -- (1,0) -- (1,1) -- (0,1) -- (0,0);

\draw[-,very thin] (0,0) -- (1,1);

%----grey-squares-generation-1

\fill[black!22!white] (2/3,0) rectangle (1,1/3);
\fill[black!22!white] (0,2/3) rectangle (1/3,1);

\fill[black!22!white] (2/9,0) rectangle (1/3,1/9);
\fill[black!22!white] (0,2/9) rectangle (1/9,1);

\fill[black!22!white] (8/9,2/3) rectangle (1,7/9);
\fill[black!22!white] (2/3,8/9) rectangle (7/9,1);

\fill[black!22!white] (2/27,0) rectangle (1/9,1/27);
\fill[black!22!white] (0,2/27) rectangle (1/27,1/3);

\fill[black!22!white] (26/27,8/9) rectangle (1,25/27);
\fill[black!22!white] (8/9,26/27) rectangle (25/27,1);

%-----lines-------

%---generation---1

\draw[-,very thin] (1/3,0) -- (1/3,1);

\draw[-,very thin] (2/3,0) -- (2/3,1);

\draw[-,very thin] (0,1/3) -- (1,1/3);

\draw[-,very thin] (0,2/3) -- (1,2/3);

%---generation---2

\draw[-,very thin] (1/9,0) -- (1/9,1);

\draw[-,very thin] (2/9,0) -- (2/9,1);

\draw[-,very thin] (7/9,0) -- (7/9,1);

\draw[-,very thin] (8/9,0) -- (8/9,1);

\draw[-,very thin] (0,1/9) -- (1,1/9);

\draw[-,very thin] (0,2/9) -- (1,2/9);

\draw[-,very thin] (0,7/9) -- (1,7/9);

\draw[-,very thin] (0,8/9) -- (1,8/9);

%---generation---3

\draw[-,very thin] (1/27,0) -- (1/27,1);

\draw[-,very thin] (2/27,0) -- (2/27,1);

\draw[-,very thin] (7/27,0) -- (7/27,1);

\draw[-,very thin] (8/27,0) -- (8/27,1);

\draw[-,very thin] (19/27,0) -- (19/27,1);

\draw[-,very thin] (20/27,0) -- (20/27,1);

\draw[-,very thin] (25/27,0) -- (25/27,1);

\draw[-,very thin] (26/27,0) -- (26/27,1);

\draw[-,very thin] (0,1/27) -- (1,1/27);

\draw[-,very thin] (0,2/27) -- (1,2/27);

\draw[-,very thin] (0,7/27) -- (1,7/27);

\draw[-,very thin] (0,8/27) -- (1,8/27);

\draw[-,very thin] (0,19/27) -- (1,19/27);

\draw[-,very thin] (0,20/27) -- (1,20/27);

\draw[-,very thin] (0,25/27) -- (1,25/27);

\draw[-,very thin] (0,26/27) -- (1,26/27);

%----white-squares-generation-1

\fill[white] (0,1/3) rectangle (1/3,2/3);

\fill[white] (2/3,1/3) rectangle (1,2/3);

\fill[white] (1/3,2/3) rectangle (2/3,1);

\fill[white] (1/3,0) rectangle (2/3,1/3);

%----white---squares---generation---2

\fill[white] (0,1/9) rectangle (1/9,2/9);

\fill[white] (2/9,1/9) rectangle (1/3,2/9);

\fill[white] (1/9,2/9) rectangle (2/9,1/3);

\fill[white] (1/9,0) rectangle (2/9,1/9);

%\fill[white] (2/3,1/9) rectangle (7/9,2/9);

%\fill[white] (8/9,1/9) rectangle (1,2/9);

%\fill[white] (7/9,2/9) rectangle (8/9,1/3);

%\fill[white] (7/9,0) rectangle (8/9,1/9);

%\fill[white] (0,7/9) rectangle (1/9,8/9);

%\fill[white] (2/9,7/9) rectangle (1/3,8/9);

%\fill[white] (1/9,8/9) rectangle (2/9,1);

%\fill[white] (1/9,2/3) rectangle (2/9,7/9);

\fill[white] (2/3,7/9) rectangle (7/9,8/9);

\fill[white] (8/9,7/9) rectangle (1,8/9);

\fill[white] (7/9,8/9) rectangle (8/9,1);

\fill[white] (7/9,2/3) rectangle (8/9,7/9);

%----black---rectangles---generation---1

\fill[black] (2/9,2/9) rectangle (1/3,1/3);

\fill[black] (2/3,2/9) rectangle (7/9,1/3);

\fill[black] (2/9,6/9) rectangle (1/3,7/9);

\fill[black] (2/3,6/9) rectangle (7/9,7/9);

%----black---rectangles---generation---2

\fill[black] (2/27,2/27) rectangle (1/9,1/9);

\fill[black] (8/9,2/27) rectangle (25/27,1/9);

\fill[black] (2/27,8/9) rectangle (1/9,25/27);

\fill[black] (8/9,8/9) rectangle (25/27,25/27);

\fill[black] (2/9,2/27) rectangle (1/3,1/9);

\fill[black] (2/3,2/27) rectangle (7/9,1/9);

\fill[black] (8/9,2/9) rectangle (25/27,1/3);

\fill[black] (8/9,6/9) rectangle (25/27,7/9);

\fill[black] (2/9,8/9) rectangle (1/3,25/27);

\fill[black] (2/3,8/9) rectangle (7/9,25/27);

\fill[black] (2/27,2/9) rectangle (1/9,1/3);

\fill[black] (2/27,6/9) rectangle (1/9,7/9);

%----black---rectangles---generation---3

\fill[black] (2/81,2/81) rectangle (1/27,1/27);

\fill[black] (26/27,2/81) rectangle (79/81,1/27);

\fill[black] (2/81,26/27) rectangle (1/27,79/81);

\fill[black] (26/27,26/27) rectangle (79/81,79/81);

\fill[black] (2/9,2/81) rectangle (1/3,1/27);

\fill[black] (2/3,2/81) rectangle (7/9,1/27);

\fill[black] (26/27,2/9) rectangle (79/81,1/3);

\fill[black] (26/27,6/9) rectangle (79/81,7/9);

\fill[black] (2/9,26/27) rectangle (1/3,79/81);

\fill[black] (2/3,26/27) rectangle (7/9,79/81);

\fill[black] (2/81,2/9) rectangle (1/27,1/3);

\fill[black] (2/81,6/9) rectangle (1/27,7/9);

\fill[black] (2/27,2/81) rectangle (1/9,1/27);

\fill[black] (8/9,2/81) rectangle (25/27,1/27);

\fill[black] (2/27,26/27) rectangle (1/9,79/81);

\fill[black] (8/9,26/27) rectangle (25/27,79/81);

\fill[black] (2/81,2/27) rectangle (1/27,1/9);

\fill[black] (2/81,8/9) rectangle (1/27,25/27);

\fill[black] (26/27,2/27) rectangle (79/81,1/9);

\fill[black] (26/27,8/9) rectangle (79/81,25/27);

%----black---rectangles---border----

\draw[-,very thick] (2/81,0) -- (1/27,0);
\draw[-,very thick] (2/27,0) -- (1/9,0);
\draw[-,very thick] (2/9,0) -- (1/3,0);
\draw[-,very thick] (2/3,0) -- (7/9,0);
\draw[-,very thick] (8/9,0) -- (25/27,0);
\draw[-,very thick] (26/27,0) -- (79/81,0);

\draw[-,very thick] (2/81,1) -- (1/27,1);
\draw[-,very thick] (2/27,1) -- (1/9,1);
\draw[-,very thick] (2/9,1) -- (1/3,1);
\draw[-,very thick] (2/3,1) -- (7/9,1);
\draw[-,very thick] (8/9,1) -- (25/27,1);
\draw[-,very thick] (26/27,1) -- (79/81,1);

\draw[-,very thick] (0,2/81) -- (0,1/27);
\draw[-,very thick] (0,2/27) -- (0,1/9);
\draw[-,very thick] (0,2/9) -- (0,1/3);
\draw[-,very thick] (0,2/3) -- (0,7/9);
\draw[-,very thick] (0,8/9) -- (0,25/27);
\draw[-,very thick] (0,26/27) -- (0,79/81);

\draw[-,very thick] (1,2/81) -- (1,1/27);
\draw[-,very thick] (1,2/27) -- (1,1/9);
\draw[-,very thick] (1,2/9) -- (1,1/3);
\draw[-,very thick] (1,2/3) -- (1,7/9);
\draw[-,very thick] (1,8/9) -- (1,25/27);
\draw[-,very thick] (1,26/27) -- (1,79/81);

%--------symbols----

%\draw (0,-1/30) -- (0,1/30); \node at (0,-5/30) {$\frac pq$};

\node at (0.45,0.6) {$\cP_{(I,0,\pm)}$};

\draw[->,very thin,dotted] (1.2,0.4) -- (0.94,0.4);
\node at (1.32,0.4) {$\cG_{(I,0,\pm)}$};

\draw[->,very thin,dotted] (-0.2,0.37) -- (0.03,0.37);
\node at (-0.32,0.37) {$\cP_{(I,1,-)}$};

\draw[->,very thin,dotted] (-0.3,0.28) -- (0.21,0.28);
\node at (-0.42,0.28) {$\cQ_{(I,1,-)}$};

\draw[->,very thin,dotted] (-0.3,0.72) -- (0.65,0.72);
\node at (-0.42,0.72) {$\cQ_{(I,1,+)}$};

\draw[->,very thin,dotted] (-0.2,0.48) -- (-0.01,0.48);
\node at (-0.32,0.48) {$\cG_{(I,1,-)}$};

\draw[->,very thin,dotted] (1.2,0.63) -- (0.97,0.63);
\node at (1.32,0.63) {$\cP_{(I,1,+)}$};

\draw[->,very thin,dotted] (1.2,0.78) -- (1.01,0.78);
\node at (1.32,0.78) {$\cG_{(I,1,+)}$};

%---hyperbolas----

\clip(0,0) rectangle (1,1);

%---curves---in---[1/3,2/3]\times[1/3,2/3]-----(less--straight)

\draw[domain=0:1, variable=\x] plot ({\x}, {3*( (53/81)^2*((\x+1)/3)^-1)-1});
\draw[name path=G, domain=0:1, variable=\x] plot ({\x}, {3*( (2/3)*(52/81)*((\x+1)/3)^-1)-1});
\draw[name path=H, domain=0:1, variable=\x] plot ({\x}, {3*( (17/27)*(53/81)*((\x+1)/3)^-1)-1});

%---below---0.398--instead--of--(17/27)^2----and---0.394--instead---of--(2/3)*(16/27)--to--make--the--gap--larger

\draw[domain=0:1, variable=\x] plot ({\x}, {3*( 0.398*((\x+1)/3)^-1)-1});
\draw[name path=E, domain=0:1, variable=\x] plot ({\x}, {3*( 0.394*((\x+1)/3)^-1)-1});
\draw[name path=F, domain=0:1, variable=\x] plot ({\x}, {3*( (5/9)*(17/27)*((\x+1)/3)^-1)-1});

\draw[domain=0:1, variable=\x] plot ({\x}, {3*( (5/9)^2*((\x+1)/3)^-1)-1});
\draw[name path=A, domain=0:1, variable=\x] plot ({\x}, {3*( (2/3)*(4/9)*((\x+1)/3)^-1)-1});
\draw[name path=B, domain=0:1, variable=\x] plot ({\x}, {3*( (1/3)*(5/9)*((\x+1)/3)^-1)-1});

%---below---0.167--instead--of--(11/27)^2----and---0.163--instead---of--(10/27)*(4/9)--to--make--the--gap--larger

\draw[domain=0:1, variable=\x] plot ({\x}, {3*( 0.167*((\x+1)/3)^-1)-1});
\draw[name path=C, domain=0:1, variable=\x] plot ({\x}, {3*( 0.163*((\x+1)/3)^-1)-1});
\draw[name path=D, domain=0:1, variable=\x] plot ({\x}, {3*( (1/3)*(11/27)*((\x+1)/3)^-1)-1});

\draw[domain=0:1, variable=\x] plot ({\x}, {3*( (29/81)^2*((\x+1)/3)^-1)-1});
\draw[name path=I, domain=0:1, variable=\x] plot ({\x}, {3*( (10/27)*(28/81)*((\x+1)/3)^-1)-1});
\draw[name path=L, domain=0:1, variable=\x] plot ({\x}, {3*( (1/3)*(29/81)*((\x+1)/3)^-1)-1});

\tikzfillbetween[of=A and B]{gray, opacity=0.1};
\tikzfillbetween[of=C and D]{gray, opacity=0.1};
\tikzfillbetween[of=E and F]{gray, opacity=0.1};
\tikzfillbetween[of=G and H]{gray, opacity=0.1};
\tikzfillbetween[of=I and L]{gray, opacity=0.1};

%---curves---in---[2/3,1]\times[2/3,1]-----(too--straight)

%\draw[domain=0:1, variable=\x] plot ({\x}, {3*( (64/81)*((\x+2)/3)^-1)-2});
%\draw[name path=A, domain=0:1, variable=\x] plot ({\x}, {3*( (7/9)*((\x+2)/3)^-1)-2});
%\draw[name path=B, domain=0:1, variable=\x] plot ({\x}, {3*( (16/27)*((\x+2)/3)^-1)-2});

%\draw[domain=0:1, variable=\x] plot ({\x}, {3*( ((20/27)^2 )*((\x+2)/3)^-1)-2});
%\draw[domain=0:1, variable=\x] plot ({\x}, {3*( ( (7/9)*(19/27)  )*((\x+2)/3)^-1)-2});
%\draw[domain=0:1, variable=\x] plot ({\x}, {3*( ( (2/3)*(20/27)  )*((\x+2)/3)^-1)-2});

%\draw[domain=0:1, variable=\x] plot ({\x}, {3*( ((56/81)^2 )*((\x+2)/3)^-1)-2});

\end{tikzpicture}

\end{center}
\caption{By Lemma~\ref{LemmaSubdivisionOutsideDiagonal}, each dark grey square has the same $P(\cdot,\cdot)$-image as its intersection with $\cD_I\times\cD_I$, represented in black. Such images are the intervals $\cP_{(I,k,\pm)}$, $k\geq0$. White regions of hyperbolas not intersecting $\cD_I\times\cD_I$ correspond to the gaps $\cG_{(I,k,\pm)}$, $k\geq0$. The intervals $\cQ_{(I,k,\pm)}$, $k\geq1$ are the images of the black squares $J\times J$ along the diagonal, where $J\in\cE(\cD_I)$. The superposition between such black squares and the light grey regions generates covered gaps at the next subdivision.}
\label{FigureSubdivision}
\end{figure}

\section{Proof of Proposition~\ref{PropositionBoundsSecondFastSubdivision}}
\label{SectionBoundsSecondSubdivision}

\subsection{Covered gaps}
\label{SectionCoveredGaps}

Use the notation of \S~\ref{SectionSubdivisionAlongDiagonal}. 
Fix $m\geq0$ and an interval $J\in\cE(\cD_m)$. A gap $G\in\cE(\cG_J)$ is \emph{covered} if $G\subset P(\cR\times\cR)$. Such gap $G$ doesn't give negative contribution to the measure of $P(\cR\times\cR)$. In the following, for $N\geq1$, we replace 
\eqref{EquationLocalPictureMeasureOfGaps} by 
\begin{equation}
\label{EquationTruncatedSumGaps}
\cL
\Big(
\bigcup_{0\leq l\leq N-1}\cG_{(J,l,+)}
\cup
\bigcup_{1\leq l<\infty}\cG_{(J,l,-)}
\Big)
=
\sum_{0\leq l\leq L-1}\frac{|J|^2}{9^{l+1}}
+
\sum_{1\leq l\leq \infty}\frac{|J|^2}{9^{l+1}}
=
\frac{|J|^2}{8}\Big(\frac{10}{9}-\frac{1}{9^N}\Big).
\end{equation}

Fix $n\geq0$ and $I\in\cE(\cD_n)$. 
For $k\geq1$ consider $E:=D_{(I,k,-)}$, which is an element of $\cE(\cD_I)$, 
and $\cQ_{(I,k,-)}=P(E\times E)$. We have 
$
\inf\cP_{(I,k-1,-)}<\sup P(E\times E)
$ 
by Remark~\ref{RemarkOrderIntervalsLocalSubdivisionDiagonal}.
See also Figure~\ref{FigureSubdivision}. 
Recalling Lemma~\ref{LemmaSubdivisionOutsideDiagonal(Local)}, for $l>>1$ we get 
\begin{equation}
\label{EquationGapsEntirelyCovered}
\cG_{(E,l,+)}\subset\cP_{(I,k-1,-)}\subset P(\cR\times\cR).
\end{equation}
Hence the subdivision of $E$ (at step $n+2$, after the subdivision of $I$, at step $n+1$) generates a tail of covered gaps $\cG_{(E,l,+)}$. We also have 
$
\inf P(E\times E)=\sup\cG_{(i,k,-)}<\inf\cP_{(I,k,-)}
$, 
which follows again from Remark~\ref{RemarkOrderIntervalsLocalSubdivisionDiagonal}. 
Hence for $l>>1$ we have 
\begin{equation}
\label{EquationGapsEntirelyUncovered}
\cG_{(E,l,-)}\cap P\big((\cD_I\times\cD_I)\setminus(E\times E)\big)=\emptyset.
\end{equation} 
Lemma~\ref{LemmaCoveredGaps} below implies that we always have either \eqref{EquationGapsEntirelyCovered} or \eqref{EquationGapsEntirelyUncovered}, that is we don't have gaps just partially covered by 
$
P\big((I\cap\cR)\times(I\cap\cR)\big)
$. 
Moreover covered gaps are determined by an arithmetic condition. The same discussion applies to $F:=D_{(I,k,+)}$ with $k\geq1$, indeed 
$
\sup\cG_{(I,k-1,+)}=\inf\cQ_{(I,k,+)}
$ 
and 
$
\inf\cP_{(I,k,+)}<\sup\cQ_{(I,k,+)}
$ 
by Remark~\ref{RemarkOrderIntervalsLocalSubdivisionDiagonal}. Thus 
$$
\cG_{(F,l,+)}\subset\cP_{(I,k,+)}
\quad\text{ and }\quad
\cG_{(F,l,-)}\cap P\big((\cD_I\times\cD_I)\setminus(F\times F)\big)=\emptyset
\quad\text{ for }\quad
l>>1.
$$

\subsection{Arithmetic condition for covered gaps}
\label{SectionArithmeticConditionCoveredGaps}

Fix $n\geq0$ and $I\in\cE(\cD_n)$ as in \S~\ref{SectionCoveredGaps}. 
For $k\geq1$ consider $E:=D_{(I,k,-)}$, that is $E=I(2/3^{k+1},1/3^k)$. 
Consider the affine maps $A_I:[0,1]\to I$ and $A_E:[0,1]\to E$. For $x\in\RR$ we have
$$
A_I(x)=
A_E\bigg(\frac{A_I(x)-A_E(0)}{|E|}\bigg)=
A_E\bigg(\frac{3^{k+1}}{|I|}\Big(x-\frac{2}{3^{k+1}}\Big)|I|\bigg)=
A_E(3^{k+1}x-2).
$$
We have 
$
\inf\cP_{(I,k-1,-)}=A_I(0)A_I(2/3^k)=A_E(-2)A_E(4)
$. 
From the definition of $\cB$ in \S~\ref{SectionDefinitionFastSubdivision}, it is clear that for $l\geq0$ we have  
\begin{equation}
\label{EquationSectionArithmeticConditionCoveredGaps(endpoints)}
\inf\cG_{(E,l,+)}=A_E(1)A_E\Big(1-\frac{2}{3^{l+1}}\Big)
\quad\text{ and }\quad
\sup\cG_{(E,l,+)}=A_E\Big(1-\frac{1}{3^{l+1}}\Big)^2.
\end{equation}
Finally  
$
A_E(0)/|E|=3^{k+1}A_I(0)|I|+2
$. 
Hence \eqref{EquationComparisonExtremalValues} gives 
\begin{align*}
&
A_E(-2)A_E(4)
<
A_E(1)
A_E\Big(1-\frac{2}{3^{l+1}}\Big)
\Leftrightarrow
-8-1+\frac{2}{3^{l+1}}
<
-\frac{A_E(0)}{|E|}\frac{2}{3^{l+1}}
\Leftrightarrow
\\
&
\hspace{1 cm}
-9+\frac{2}{3^{l+1}}
<
-\Big(3^{k+1}\frac{A_I(0)}{|I|}+2\Big)\frac{2}{3^{l+1}}
\Leftrightarrow
3^{l+2}
>
2\cdot3^k\frac{A_I(0)}{|I|}+2.
\\
&
A_E(-2)A_E(4)
<
A_E\Big(1-\frac{1}{3^{l+1}}\Big)^2
\Leftrightarrow
-8-\Big(\frac{3^{l+1}-1}{3^{l+1}}\Big)^2
<
-\frac{A_E(0)}{|E|}\frac{2}{3^{l+1}}
\Leftrightarrow
\\
&
\hspace{1 cm}
3^{l+3}+\frac{1}{3^{l+1}}
>
2\cdot3^{k+1}\frac{A_I(0)}{|I|}+6
\Leftrightarrow
3^{l+2}>2\cdot3^k\frac{A_I(0)}{|I|}+2,
\end{align*}
where the last equivalence holds because $A_I(0)/|I|$ is integer (this can be seen by induction on $n$) and therefore we always have  
$
3^{l+3}\not=2\cdot3^{k+1}A_I(0)/|I|+6
$. 
We get
\begin{equation}
\label{EquationAritmeticConditionLEFT}
\inf\cP_{(I,k-1,-)}<\inf\cG_{(E,l,+)}
\Leftrightarrow
3^{l+2}>2\cdot3^k\frac{A_I(0)}{|I|}+2
\Leftrightarrow
\inf\cP_{(I,k-1,-)}<\sup\cG_{(E,l,+)}.
\end{equation}

Similarly, for $n\geq0$ and $I\in\cE(\cD_n)$ as in \S~\ref{SectionCoveredGaps}, and for $k\geq1$ consider $F:=D_{(I,k,+)}$, that is 
$
F=I(1-1/3^k,1-2/3^{k+1})
$. 
The maps $A_I:[0,1]\to I$ and $A_F:[0,1]\to E$ satisfy 
$$
A_I(x)=
A_F\bigg(\frac{3^{k+1}}{|I|}\Big(x-1+\frac{1}{3^k}\Big)|I|\bigg)=
A_F\big(3^{k+1}(x-1)+3\big)
\quad\text{ for }\quad 
x\in\RR.
$$
We have 
$
\inf\cP_{(I,k,+)}=A_I(1-1/3^{k+1})A_I(1-1/3^k)=A_F(2)A_F(0)
$.
For $l\geq0$ the expression of $\inf\cG_{(F,l,+)}$ and $\sup\cG_{(F,l,+)}$ in terms of $A_F$ is as in 
\eqref{EquationSectionArithmeticConditionCoveredGaps(endpoints)}. 
Thus \eqref{EquationComparisonExtremalValues} gives 
\begin{align*}
&
A_F(2)A_F(0)
<
A_F(1)
A_F\Big(1-\frac{2}{3^{l+1}}\Big)
\Leftrightarrow
\frac{2}{3^{l+1}}-1
<
-\frac{A_F(0)}{|F|}\frac{2}{3^{l+1}}
\Leftrightarrow
\\
&
\hspace{1 cm}
\frac{2}{3^{l+1}}-1
<
-\bigg(3^{k+1}\Big(\frac{A_I(0)}{|I|}+1\Big)-3\bigg)\frac{2}{3^{l+1}}
\Leftrightarrow
3^{l+1}
>
2\cdot3^{k+1}\Big(\frac{A_I(0)}{|I|}+1\Big)-4.
\\
&
A_F(2)A_F(0)
<
A_F\Big(1-\frac{1}{3^{l+1}}\Big)^2
\Leftrightarrow
-\Big(\frac{3^{l+1}-1}{3^{l+1}}\Big)^2
<
-\frac{A_F(0)}{|F|}\frac{2}{3^{l+1}}
\Leftrightarrow
\\
&
\hspace{1 cm}
3^{l+1}+\frac{1}{3^{l+1}}
>
2\cdot3^{k+1}\Big(\frac{A_I(0)}{|I|}+1\Big)-4
\Leftrightarrow
3^{l+1}
>
2\cdot3^{k+1}\Big(\frac{A_I(0)}{|I|}+1\Big)-4,
\end{align*}
where again the last equivalence holds because $A_I(0)/|I|$ is integer and therefore we always have  
$
3^{l+1}\not=2\cdot3^{k+1}\big(A_I(0)/|I|+1\big)-4
$. 
We get
\begin{equation}
\label{EquationAritmeticConditionRIGHT}
\inf\cP_{(I,k,+)}<\inf\cG_{(F,l,+)}
\Leftrightarrow
3^{l+1}>2\cdot3^{k+1}\Big(\frac{A_I(0)}{|I|}+1\Big)-4
\Leftrightarrow
\inf\cP_{(I,k,-)}<\sup\cG_{(F,l,+)}.
\end{equation}

In Lemma~\ref{LemmaCoveredGaps} below, fix $n\geq0$, $I\in\cE(\cD_n)$ and $k\geq1$. 

%Recall from \S~\ref{SectionSubdivisionAlongDiagonal} that for any interval $J$ we have 
%$\cG_{(J,0,-)}=\cG_{(J,0,+)}$.

\begin{lemma}
\label{LemmaCoveredGaps}
We have 
$
L(k):=\min\{l\geq0:\text{ \eqref{EquationAritmeticConditionLEFT} holds }\}\geq k+2n
$. 
For $E:=D_{(I,k,-)}$ and for any $l\geq L(k)$ we have 
$
\cG_{(E,l,+)}\subset P(\cR\times\cR)
$ 
and on the other hand 
$$
\Big(
\bigcup_{0\leq l\leq L(k)-1}\cG_{(E,l,+)}
\cup
\bigcup_{l=1}^\infty\cG_{(E,l,-)}
\Big)
\cap
P\big((I\cap\cD_{n+2})\times(I\cap\cD_{n+2})\big)
=\emptyset.
$$ 
Moreover    
$
R(k):=\min\{l\geq0:\text{ \eqref{EquationAritmeticConditionRIGHT} holds }\}\geq k+2n
$. 
For $F:=D_{(I,k,+)}$ and for any $l\geq R(k)$ we have 
$
\cG_{(F,l,+)}\subset P(\cR\times\cR)
$ 
and on the other hand 
$$
\Big(
\bigcup_{0\leq l\leq R(k)-1}\cG_{(F,l,+)}
\cup
\bigcup_{l=1}^\infty\cG_{(F,l,-)}
\Big)
\cap
P\big((I\cap\cD_{n+2})\times(I\cap\cD_{n+2})\big)
=\emptyset.
$$
\end{lemma}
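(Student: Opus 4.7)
The plan is to combine the arithmetic conditions \eqref{EquationAritmeticConditionLEFT}--\eqref{EquationAritmeticConditionRIGHT} with the ordering from Remark~\ref{RemarkOrderIntervalsLocalSubdivisionDiagonal} to pinpoint exactly which gaps of the local picture of $E$ (resp.\ $F$) are covered from outside. First I would establish the lower bound $L(k)\geq k+2n$; the bound $R(k)\geq k+2n$ is parallel. By Lemma~\ref{LemmaFastSubdivisionSubdividesCantor} every $I\in\cE(\cD_n)$ has the form $[m/3^j,(m+1)/3^j]$, so $A_I(0)/|I|=m$ is a positive integer. Since every interval in $\cE([0,1]\setminus\cB)$ has length $3^{-r}$ with $r\geq 2$, an easy induction gives $|I|\leq 3^{-(2n+1)}$, and combined with $A_I(0)\geq 2/3$ this yields $A_I(0)/|I|\geq 2\cdot 3^{2n}$. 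Plugged into \eqref{EquationAritmeticConditionLEFT} this forces $3^{l+2}>4\cdot 3^{k+2n}+2$, which fails for all $l\leq k+2n-1$.

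For the inclusion $\cG_{(E,l,+)}\subset P(\cR\times\cR)$ when $l\geq L(k)$, I would prove the stronger statement $\cG_{(E,l,+)}\subset\cP_{(I,k-1,-)}$ and then appeal to Lemma~\ref{LemmaSubdivisionOutsideDiagonal(Local)}. The arithmetic condition gives $\inf\cP_{(I,k-1,-)}<\inf\cG_{(E,l,+)}$; for the upper endpoint I observe $\sup\cG_{(E,l,+)}\leq A_E(1)^2=A_I(1/3^k)^2<A_I(1/3^k)A_I(1/3^{k-1})=\sup\cP_{(I,k-1,-)}$, since $A_I$ is affine, orientation-preserving and positive. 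The statement for $F$ with $\cP_{(I,k,+)}$ in place of $\cP_{(I,k-1,-)}$ is handled identically using \eqref{EquationAritmeticConditionRIGHT}.

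The main obstacle is the disjointness statement. The plan is to decompose
\[
I\cap\cD_{n+2}=\bigcup_{E'\in\cE(\cD_I)}\cD_{E'}\cup\{A_I(0),A_I(1)\},
\]
and analyse $P((I\cap\cD_{n+2})\times(I\cap\cD_{n+2}))$ piece by piece. For the diagonal contributions $P(\cD_{E'}\times\cD_{E'})$, Proposition~\ref{PropositionLocalPictureGaps} gives $\cQ_{(I,k',\sigma)}\setminus\cG_{E'}$; when $E'=E$ this misses $\cG_E$ by definition, and when $E'\neq E$ the ordering in Remark~\ref{RemarkOrderIntervalsLocalSubdivisionDiagonal} makes distinct $\cQ$'s disjoint intervals, so these pieces never meet $\cQ_{(I,k,-)}\supset\cG_{(E,l,\pm)}$. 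For off-diagonal contributions $P(\cD_{E_1}\times\cD_{E_2})$ with $E_1\neq E_2$, and likewise for the scaled sets $A_I(0)\cdot(I\cap\cD_{n+2})$ and $A_I(1)\cdot(I\cap\cD_{n+2})$ coming from the two isolated points, arguing as in Lemma~\ref{LemmaSubdivisionOutsideDiagonal(Local)} places them inside some $\cP_{(I,k',\sigma)}$. The ordering then shows that, among all such $\cP$'s, only $\cP_{(I,k-1,-)}$ actually meets $\cQ_{(I,k,-)}$.

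It remains to verify that $\cP_{(I,k-1,-)}$ itself misses the uncovered gaps. For $\cG_{(E,l,+)}$ with $l<L(k)$, both equivalences in \eqref{EquationAritmeticConditionLEFT} fail simultaneously (with integrality of $A_I(0)/|I|$ ruling out borderline equalities, just as used in deriving \eqref{EquationAritmeticConditionLEFT}), whence $\sup\cG_{(E,l,+)}<\inf\cP_{(I,k-1,-)}$. For $\cG_{(E,l,-)}$ with $l\geq 1$, a direct application of \eqref{EquationComparisonExtremalValues} comparing $\sup\cG_{(E,l,-)}=A_E(2/3^{l+1})^2$ with $\inf\cP_{(I,k-1,-)}=A_E(-2)A_E(4)$ yields the required strict inequality once the ratio $A_E(0)/|E|$ exceeds a small absolute constant, which is automatic since $A_E(0)/|E|=3^{k+1}A_I(0)/|I|+2\geq 2\cdot 3^{k+2n+1}+2$. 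The right-hand statements for $F$ follow the same template with $\cP_{(I,k,+)}$ in the role of $\cP_{(I,k-1,-)}$.
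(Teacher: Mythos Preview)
Your proposal is correct and follows essentially the same route as the paper. Both arguments hinge on the bound $A_I(0)/|I|\geq 2\cdot 9^n$ (you obtain it via $|I|\le 3^{-(2n+1)}$ and $A_I(0)\ge 2/3$, the paper by direct induction on $n$), on the double equivalence in \eqref{EquationAritmeticConditionLEFT}--\eqref{EquationAritmeticConditionRIGHT} which rules out partially covered gaps, and on the ordering of Remark~\ref{RemarkOrderIntervalsLocalSubdivisionDiagonal}. The paper compresses your off-diagonal analysis into the single line ``$\cG_{(E,L(k)-1,+)}\cap P\big((\cD_I\times\cD_I)\setminus(E\times E)\big)=\emptyset$'' and then invokes the ordering, whereas you make explicit that the only piece of $P\big((\cD_I\times\cD_I)\setminus(E\times E)\big)$ meeting $\cQ_{(I,k,-)}$ is $\cP_{(I,k-1,-)}$; your separate computation for the gaps $\cG_{(E,l,-)}$ via \eqref{EquationComparisonExtremalValues} is also valid but could be replaced, as the paper implicitly does, by the observation that these gaps all lie below $\cG_{(E,L(k)-1,+)}$.
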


\begin{proof}
We have $A_I(0)/|I|\geq2\cdot9^n$ for any $n\geq0$ and any $I\in\cE(\cD_n)$. This can be easily proved by induction on $n$, observing that $A_I(0)/|I|=2$ for $I=[2/3,1]=\cD_0$. Therefore \eqref{EquationAritmeticConditionLEFT} and \eqref{EquationAritmeticConditionRIGHT} give 
$L(k)\geq k+2n$ and $R(k)\geq k+2n$. Moreover \eqref{EquationAritmeticConditionLEFT} implies 
$$
\cG_{(E,L(k),+)}\subset P(\cR\times\cR)
\quad\text{ and }\quad
\cG_{(E,L(k)-1,+)}\cap P\big((\cD_I\times\cD_I)\setminus(E\times E)\big)=\emptyset,
$$
that is no gap $\cG_{(E,l,+)}$ is partially covered. The order between gaps in Remark~\ref{RemarkOrderIntervalsLocalSubdivisionDiagonal} gives 
$$
\Big(
\bigcup_{0\leq l\leq L(k)-1}\cG_{(E,l,+)}
\cup
\bigcup_{0\leq l<\infty}\cG_{(E,l,-)}
\Big)
\cap 
P\big((\cD_I\times\cD_I)\setminus(E\times E)\big)=\emptyset.
$$
The first statement follows. The second statement follows by a similar argument.
\end{proof}

\subsection{End of the proof of Proposition~\ref{PropositionBoundsSecondFastSubdivision}}

Set $I:=[2/3,1]$. We have $A_I(0)/|I|=2$. For any $n\geq1$ define 
$$
\mu_n:=\cL\big(P(I\times I)\setminus P(\cD_n\times\cD_n)\big).
$$
According to \eqref{EquationDefinitionFastSubdivision} we have $\cD_1=\cD_{[2/3,1]}$, thus 
\eqref{Equation(1)PropositionLocalPictureGaps} and 
\eqref{EquationLocalPictureMeasureOfGaps} give
$$
\mu_1=\cL(\cG_{[2/3,1]})
=
\frac{5}{36}\big|[2/3,1]\big|^2
=
\frac{5}{4\cdot81}.
$$

For $k\geq1$ consider $E_k:=I(2/3^{k+1},1/3^k)$ and $F_k:=I(1-1/3^k,1-2/3^{k+1})$. 
We have 
$
|E_k|=|F_k|=|I|/3^{k+1}=1/3^{k+2}
$. 
Since $A_I(0)/|I|=2$, it is easy to see that in Lemma~\ref{LemmaCoveredGaps} we have $L(k)=k$ and $R(k)=k+2$. 
Thus Lemma~\ref{LemmaCoveredGaps} and \eqref{EquationTruncatedSumGaps} give
\begin{align*}
\mu_2-\mu_1
&
=
\sum_{k=1}^\infty\frac{|E_k|^2}{8}\Big(\frac{10}{9}-\frac{1}{9^k}\Big)
+
\sum_{k=1}^\infty\frac{|F_k|^2}{8}\Big(\frac{10}{9}-\frac{1}{9^{k+2}}\Big)
=
\frac{859}{9^4\cdot5\cdot64}.
\end{align*}

Fix $k\geq1$ and $F=F_k$. For $m\geq1$ let $D(m,\pm):=D_{(F,m,\pm)}$ be the intervals arising from the subdivision of $F$ at the $2$-nd step $F$. 
At the $3$-rd step any $D=D(m,\pm)$ generates gaps $\cG_{(D,l,\pm)}$ with $l\geq0$. 
We have $A_F(0)/|F|=3^{k+2}-3$. Thus 
\eqref{EquationAritmeticConditionLEFT} and \eqref{EquationAritmeticConditionRIGHT} give
\begin{align*}
&
\cG_{(D(m,-),l,+))}\subset \cP_{(F,m-1,-)}
\quad\Leftrightarrow\quad
3^{l+2}>2\cdot3^m(3^{k+2}-3)+2
\quad\Leftrightarrow\quad
l\geq m+k+1.
\\
&
\cG_{(D(m,+),l,+))}\subset\cP_{(F,m,+)}
\quad\Leftrightarrow\quad
3^{l+1}>2\cdot3^{m+1}(3^{k+2}-2)-4
\quad\Leftrightarrow\quad
l\geq m+k+3.
\end{align*}
Recall Remark~\ref{RemarkOrderIntervalsLocalSubdivisionDiagonal}, and that in our notation 
$
P\big(D(m,\pm)\times D(m,\pm)\big)=\cQ_{(F,m,\pm)}
$. 
For any $m\geq k+3$ we have 
$
\cQ_{(F,m,+)}\subset\cP_{(I,k,+)}
$, 
indeed $R(k)=k+2$ and thus
$$
\inf\cG_{(F,k+1,+)}<\inf\cP_{(I,k,+)}<\inf\cG_{(F,k+2,+)}<\inf\cQ_{(F,k+3,+)}\leq\inf\cQ_{(F,m,+)}.
$$ 
Moreover 
$
\cQ_{(F,k+2,+)}\cap\cP_{(I,k,+)}=\emptyset
$, 
indeed using \eqref{EquationComparisonExtremalValues} as in \S~\ref{SectionArithmeticConditionCoveredGaps} we get 
\begin{align*}
&
\sup\cQ_{(F,k+2,+)}<\inf\cP_{(I,k,+)}
\Leftrightarrow
A_F(2)A_F(0)
>
A_F\Big(1-\frac{2}{3^{k+3}}\Big)^2
\Leftrightarrow
\\
&
\hspace{ 1 cm }
\Big(\frac{3^{k+3}-2}{3^{k+3}}\Big)^2
<
\frac{A_F(0)}{|F|}\frac{4}{3^{k+3}}
\Leftrightarrow 
8+\frac{4}{3^{k+3}}<3^{k+2},
\end{align*}
which is true for any $k\geq1$. Finally  
$
\sup\cG_{(I,k-1,+)}=\inf\cQ_{(I,k,+)}<\inf\cQ_{(F,m,-)}
$ 
for $m\geq1$. Hence 
$$
\Big(
\bigcup_{m=1}^\infty\cQ_{(F,m,-)}
\cup
\bigcup_{m=1}^{k+2}\cQ_{(F,m,+)}
\Big)
\cap
\Big(\cP_{(I,k-1,+)}\cup\cP_{(I,k,+)}\Big)
=
\emptyset.
$$ 
Let $\beta_k$ be the total measure of non-covered gaps generated by intervals 
$
D(m,\pm)\in\cE(\cD_F)
$, 
where $F=F_k$ and $m\geq1$. The discussion above, Lemma~\ref{LemmaCoveredGaps} and 
\eqref{EquationTruncatedSumGaps} imply 
\begin{align*}
\beta_k
&
=
\sum_{m=1}^\infty\frac{|D(m,-)|^2}{8}\Big(\frac{10}{9}-\frac{1}{9^{m+k+1}}\Big)
+
\sum_{m=1}^{k+2}\frac{|D(m,+)|^2}{8}\Big(\frac{10}{9}-\frac{1}{9^{m+k+3}}\Big)
\\
&
=
\frac{1}{8}\sum_{m=1}^\infty\frac{1}{9^{k+m+3}}\Big(\frac{10}{9}-\frac{1}{9^{m+k+1}}\Big)
+
\frac{1}{8}\sum_{m=1}^{k+2}\frac{1}{9^{k+m+3}}\Big(\frac{10}{9}-\frac{1}{9^{m+k+3}}\Big)
\\
&
=
\frac{1}{64}
\Big(\frac{20}{9^{k+4}}-\frac{91/5}{9^{2k+6}}+\frac{1/10}{9^{4k+10}}\Big).
\end{align*}

Now fix $k\geq1$ and $E=E_k$. For $m\geq1$ let $D(m,\pm):=D_{(E,m,\pm)}$ be the intervals arising from the subdivision of $E$ at the $2$-nd step. 
At the $3$-rd step any $D=D(m,\pm)$ generates gaps $\cG_{(D,l,\pm)}$ with $l\geq0$. 
We have $A_E(0)/|E|=2\cdot3^{k+1}+2$. Thus 
\eqref{EquationAritmeticConditionLEFT} and \eqref{EquationAritmeticConditionRIGHT} give
\begin{align*}
&
\cG_{(D(m,-),l,+))}\subset \cP_{(F,m-1,-)}
\quad\Leftrightarrow\quad
3^{l+2}>2\cdot3^m(2\cdot3^{k+1}+2)+2
\quad\Leftrightarrow\quad
l\geq m+k+1.
\\
&
\cG_{(D(m,+),l,+))}\subset\cP_{(F,m,+)}
\quad\Leftrightarrow\quad
3^{l+1}>2\cdot3^{m+1}(2\cdot3^{k+1}+3)-4
\quad\Leftrightarrow\quad
l\geq m+k+3.
\end{align*}
We have 
$
\inf\cG_{(E,k-1,+)}<\inf\cP_{(I,k-1,-)}<\inf\cG_{(E,k,+)}<\inf\cQ_{(E,k+1,+)}
$ 
because $L(k)=k$. As above, Remark~\ref{RemarkOrderIntervalsLocalSubdivisionDiagonal} gives 
$
\cQ_{(E,m,+)}\subset\cP_{(I,k-1,-)}
$ 
for any $m\geq k+1$, and  
$$
\Big(
\bigcup_{m=1}^\infty\cQ_{(E,m,-)}
\cup
\bigcup_{m=1}^{k-1}\cQ_{(E,m,+)}
\Big)
\cap
\Big(\cP_{(I,k,-)}\cup\cP_{(I,k-1,+)}\Big)
=
\emptyset.
$$ 
It remains to determine whether 
$
\cQ_{(E,k,+)}\cap\cP_{(I,k,-)}=\emptyset
$ 
or not. 
From \S~\ref{SectionArithmeticConditionCoveredGaps} and \eqref{EquationComparisonExtremalValues} we get
\begin{align*}
&
\sup\cQ_{(E,k,+)}
<
\inf\cP_{(I,k-1,-)}
\Leftrightarrow
A_E(-2)A_E(4)
>
A_E\Big(1-\frac{2}{3^{k+1}}\Big)^2
\Leftrightarrow
\\
&
\hspace{ 1 cm }
\Big(\frac{3^{k+1}-2}{3^{k+1}}\Big)^2+8
<
\frac{A_E(0)}{|E|}\frac{4}{3^{k+1}}
\Leftrightarrow
3^{k+1}+\frac{4}{3^{k+1}}<12,
\end{align*}
which is true for $k=1$ and false for $k\geq2$. Recall that in our notation $E=E_k$ and 
$
\cE(\cD_E)=\{D(m,\pm)=D_{(E,m,\pm)}:m\geq1\}
$. 
In particular 
$
D(k,+)=D_{(E_k,k,+)}\subset E_k
$. 
Let $\alpha_k$ be the total measure of non covered gaps generated by intervals 
$
D(m,\pm)\in\cE(\cD_E)\setminus\{D(k,+)\}
$. 
Lemma~\ref{LemmaCoveredGaps} and \eqref{EquationTruncatedSumGaps} give
\begin{align*}
\alpha_k
&
=
\sum_{m=1}^\infty\frac{|D(m,-)|^2}{8}\Big(\frac{10}{9}-\frac{1}{9^{m+k+1}}\Big)
+
\sum_{m=1}^{k-1}\frac{|D(m,+)|^2}{8}\Big(\frac{10}{9}-\frac{1}{9^{m+k+3}}\Big)
\\
&
=
\beta_k-
\sum_{m=k}^{k+2}\frac{|D(m,+)|^2}{8}\Big(\frac{10}{9}-\frac{1}{9^{m+k+3}}\Big)
=
\beta_k-\frac{1}{8}\sum_{j=3,4,5}\frac{1}{9^{2k+j}}\Big(\frac{10}{9}-\frac{1}{9^{2k+j}}\Big).
\end{align*}

Let $\gamma_k$ be the total measure of non covered gaps generated only by $D(k,+)$. 
For $k=1$ we have 
$
\cQ_{(E,1,+)}\cap\cP_{(I,0,-)}=\emptyset
$. 
Then applying \eqref{EquationTruncatedSumGaps} with $N=k+m+3$ for $m=k=1$ we get 
$$
\gamma_1=
\frac{|D(1,+)|^2}{8}\Big(\frac{10}{9}-\frac{1}{9^{2+3}}\Big)=
\frac{5}{4\cdot9^6}-\frac{1}{8\cdot9^{10}}.
$$
Conversely
$
\cQ_{(E,k,+)}\cap\cP_{(I,k-1,-)}\not=\emptyset
$ 
for $k\geq2$. Thus $\cP_{(I,k-1,-)}$ cover some gaps arising from $D(k,+)$ and we can only give an upper bound on $\gamma_k$ using \eqref{EquationLocalPictureMeasureOfGaps}. We obtain 
$$
0
\leq
\mu_3
-
\Big(\mu_2+\gamma_1+\sum_{k\geq1}(\beta_k+\alpha_k)\Big)
\leq
\sum_{k\geq2}\gamma_k
\leq
\sum_{k\geq2}\cL(\cG_{D(k,+)})
=
\sum_{k\geq2}\frac{5}{36}\frac{1}{9^{2k+3}}
=
\frac{1}{64\cdot9^6}.
$$

We have 
$$
\sum_{k\geq1}\beta_k+\alpha_k
=
\frac{1}{8}
\sum_{k=1}^\infty
\frac{5}{9^{k+4}}
-
\frac{910}{9^{2k+6}}
-
\frac{91/20}{9^{2k+6}}
+
\frac{1/40}{9^{4k+10}}
+
\frac{6643}{9^{4k+10}}
=
\frac{157}{32\cdot9^6}-\epsilon
$$ 
where 
$$
\epsilon:=
\frac{1}{8}\sum_{k\geq1}
\frac{91/20}{9^{2k+6}}
-
\frac{1/40}{9^{4k+10}}
-
\frac{6643}{9^{4k+10}}
\in
\Big(0,\frac{1}{128\cdot9^6}\Big).
$$
Set 
$
M:=\mu_2+\gamma_1+157/(32\cdot9^6)
$, 
so that 
$$
\frac{5}{9}-M=
\frac{5}{9}-
\Big(
\frac{5}{4\cdot81}
+
\frac{859}{9^4\cdot5\cdot64}
+
\frac{5}{4\cdot9^6}-\frac{1}{8\cdot9^{10}}
+
\frac{157}{32\cdot9^6}
\Big)
=
\frac{91782451}{170061120}+\frac{1}{8\cdot9^{10}}.
$$ 
We have 
$
\cL(P(\cD_3\times\cD_3))=5/9-\mu_3
$. 
Therefore 
$$
\frac{-1}{64\cdot9^6}
<
\frac{-1}{64\cdot9^6}
+
\frac{1}{8\cdot9^{10}}
+
\epsilon
\leq 
\cL\big(P(\cD_3\times\cD_3)\big)-\frac{91782451}{170061120}
\leq
\frac{1}{8\cdot9^{10}}+\epsilon
<
\frac{1}{64\cdot9^6}.
$$ 
Proposition~\ref{PropositionBoundsSecondFastSubdivision} is proved.

\end{document}